\begin{document}
\title[ GENERALIZED  ZK EQUATION ]
{THE IVP FOR A CERTAIN DISPERSION GENERALIZED \\ ZK EQUATION IN BI-PERIODIC SPACES}

\author[Carolina Albarracin, Guillermo Rodriguez-BLANCO]
{Carolina Albarracin, Guillermo Rodriguez-Blanco}

\address{Carolina Albarracin \newline
Departamento de Matematicas,
 Universidad Nacional de Colombia, 
Carrera 30, calle 45, Bogot\'a, Colombia}
\email{calbarracinh@unal.edu.co}

\address{Guillermo Rodriguez-Blanco \newline
Departamento de Matematicas,
 Universidad Nacional de Colombia, 
Carrera 30, calle 45, Bogot\'a, Colombia}
\email{grodriguezb@unal.edu.co}

\subjclass[2010]{35B10, 35A01}
\keywords{Well-posedness, Sobolev spaces, Zakharov-Kutnesov equation
 \hfill\break\indent }

\begin{abstract}
 We establish well-posedness conclusions for the Cauchy problem associated to the dispersion generalized Zakharov-Kutnesov equation in bi-periodic  Sobolev spaces  $H^{s}\left(\mathbb{T}^{2}\right)$, $s>(\frac{3}{2}-\frac{1}{2^{\alpha+2}})(\frac{3}{2}-\frac{\beta}{4})$.
\end{abstract}

\maketitle
\numberwithin{equation}{section}
\newtheorem{theorem}{Theorem}[section]
\newtheorem{lemma}[theorem]{Lemma}
\newtheorem{definition}[theorem]{Definition}
\newtheorem{proposition}[theorem]{Proposition}
\newtheorem{remark}[theorem]{Remark}
\allowdisplaybreaks

\section{Introduction}
In the present paper, we deal with the well-posedness of the initial value problem (IVP):
\begin{align}\label{bipe}
	\begin{cases}
		\partial_{t}u-\partial_{x}\left(D_{x}^{1+\alpha}\pm D_{y}^{1+\beta}\right)u+uu_{x}=0 & (x,y)\in\mathbb{T}^{2},\; t\in\mathbb{R}, \\
		u\left(0\right)=\phi & \phi\in H^{s}(\mathbb{T}^{2}),
	\end{cases}
\end{align}
with $\alpha=1,2\;or\; 3$ and $\;0<\beta\leq1$, where  $\mathbb{T}^{2}=\mathbb{R}^{2}/(2\pi\mathbb{Z})^{2}$ and $D^{\beta}_{y}=(
-\partial_{y}^{2})^{\frac{\beta}{2}}$ is a homogeneous fractional derivative in the variable $y$, and it is defined via the Fourier transform by $\widehat{(D^{\beta}_{y}f)}(m,n):
=|n|^{\beta}\hat{f}(m,n)$, analogously is defined $D_{x}^{\alpha}$ for the variable $x$. When,  $\alpha=\beta=1$ and the + sign, this equation
corresponds to the well-known Zakharov-Kuznetsov (ZK) equation, that is a natural bi-dimenmsional extension of the well-known Korteweg de Vries (KdV) equation. The ZK equation describes the propagation of nonlinear ion-acoustic
waves in magnetized plasma and it's well-posedness has been extensively studied in
\cite{linares2009well, linares2010well, faminskii1995cauchy,  biagioni2003well, cunha2014ivp}. 
Recently, models that  generalize the ZK equation have emerged, see \cite{latorre2006evolution}. An important case in the study of this type of equations is the well-posedness in the periodic Sobolev space. In this regard, Linares et al. in \cite{linares2018periodic} obtained  well-posedness for the ZK equation in the periodic Sobolev spaces $H^{s}(\mathbb{T}^{2})$, for $s>\frac{5}{3}$.
Improved by Schippa in \cite{schippa2019cauchy}  to $s>\frac{3}{2}$ via short-time
bilinear Strichartz estimates adapting the bilinear arguments   in the periodic Sobolev space $H^{s}(\mathbb{T}^{2})$. 
Recently, Kinoshita and Schippa  proved local well-posedness in  $H^{s}(\mathbb{T}^{2})$ for $s>1$ in \cite{kinoshita2021loomis},  
the ingredient to improve on previous results
is a nonlinear Loomis-Whitney-type inequality. 

For $\alpha=0$,   $\beta=1$ and the + sign, the equation (\ref{bipe}) coincides with  the Benjamin-Ono-Zakharov-Kuznetsov (BOZK) equation that
is a model for thin nano-conductors on a dielectric
substrate in   \cite{latorre2006evolution}. For result concerning  well-posedness (see \cite{esfahani2009instability, esfahani2011ill,cunha2016ivp}). When $\alpha=1,3,5 \cdots$ the equation (\ref{bipe})  in one dimension  is,
\begin{align}\label{disaltas}
	\partial_{t}u-\partial_{x}^{\alpha}u+uu_{x}=0,
\end{align}

that is known as the KdV equation with higher dispersion(see \cite{gorsky2009well} and the references contained therein). Results regarding  local 
well-posedness for (\ref{disaltas}) in $H^{s}(\mathbb{T})$, $s\geq-\frac{1}{2}$ were obtained in \cite{gorsky2009well}, using bilinear
estimates in Bourgain spaces. Unlike our work, $s$ does
not depend on the order of the dispersion term. 
Another similar equation studied in $\mathbb{R}^{2}$ is the  two-dimensional dispersive generalized g-BOZK equation (see \cite{ribaud2016local}), in this case the general dispersion is in $x$ with $0\leq\alpha\leq1$ and $\beta =1$, where local well-posedness was established in $H^{s}(\mathbb{R}^{2})$ for $s>\frac{2}{1+\alpha}-\frac{3}{4}$.

We observe that, the IVP (\ref{bipe}) satisfies at least following conserved quantities:
\begin{align}\label{masa}
	M(u)=\int_{\mathbb{T}^{2}}u^{2}dxdy
\end{align}
and
\begin{align}\label{energia}
	E(u)=\frac{1}{2}\int_{\mathbb{T}^{2}}\left(\left(D^ {\frac{1+\alpha}{2}}_{x}u\right)^{2}\pm\left(D_{y}^{\frac{1+\beta}{2}}u\right)^{2}-\frac{u^{3}}{3}\right)dxdy,
\end{align}

so that (\ref{masa}) and (\ref{energia}) are usefull to extend the local solution to global one, so to attain the global well-posedness in anisotropic Sobolev spaces $H^{\frac{1+\alpha}{2},\frac{1+\beta}{2}}(\mathbb{T}^{2})$. Unfortunately, we are not dealing with the well-posedness in this spaces here.
Our goal in this work is to improve the local well-posedness in periodic Sobolev space $H^{s}(\mathbb{T}^{2})$, $s>2$ for (\ref{bipe}), which is obtained from a parabolic regularization argument,  following Iorio's ideas in \cite{iorio2001fourier} Chapther 6. One way to improve this result is to use again energy estimates in smooth solutions, what leads us to obtain control of the norms  $\left\Vert\nabla u\right
\Vert_{L^{1}_{T}L^{\infty}_{xy}}$. If the Sobolev embedding is used, we achieve this estimate, but we cannot improve regularity.
Therefore, we use  the short-time Strichartz linear approach introduced by Koch and Tzvetkov in  \cite{koch2003local}, to get local well-posedness of  Benjamin-Ono (BO) equation in $\mathbb{R}$, which has been proved to be useful for these types of equations (see  \cite{hickman2019higher,linares2018cauchy,kenig2004local,kenig2003local}).
 But to perform this task in  two dimensions with periodic context,  we adapt the method used to prove local well-posedness for the Cauchy problem associated to the third-order KP-I and fifth-order KP-I equations on $\mathbb{R}\times\mathbb{T}$ and $\mathbb{T}^{2}$ proposed by Ionescu and Kenig in  \cite{ionescu2009local} (for other applications, see \cite{linares2010well, bustamante2019periodic} and the references therein).
 First, a localized Strichartz-type
estimate for the linear part of the equation
is obtained, where the main difficulty lies in
obtaining bounds for exponential sums in
the periodic case (see \cite{graham1991van}), such sums have been treated in different contexts in number theory. Then fixing $\alpha$, we can apply a lemma due to H. Weyl (Lemma \ref{weyl} below)  
that combined with some Strichartz estimates give a  control to $\left\Vert u\right\Vert_{L^{1}_{T}L^{\infty}_{xy}}$, $\left\Vert u_{x}\right\Vert_{L^{1}_{T}L^{\infty}_{xy}}$ and $\left\Vert u_{y}\right\Vert_{L^{1}_{T}L^{\infty}_{xy}}$. Finally by  standard compactness methods, we get the result.
 Although
 the method used by  Kinoshita and  Shippa  \cite{kinoshita2021loomis} produces better results for the ZK equation in the bi-periodic setting, it is not clear how to extended this technique to equations involving  fractional operators in both spatial  variables $x$ and $y$. 
 
We  will now give the precise statement of our result in spaces of lower regularity,

\begin{theorem}\label{teoimpo}
	\label{teo26}
	Let $\alpha=1,2\;or\;3$, $0<\beta\leq1$,  $\phi\in H^{s}(\mathbb{T}^{2})$ and $s>(\frac{3}{2}-\frac{1}{2^{\alpha+2}})(\frac{3}{2}-\frac{\beta}{4})$ such that $\int_{\mathbb{T}}\phi(x,y)  dx=0$ a. e. $y\in \mathbb{T}$,  there exist  $T=T\left(\left\Vert \phi\right\Vert _{H^{s}}\right)$ and a unique solution of IVP (\ref{bipe}), such that  $u\in C\left(\left[0,T\right];H^{s}(\mathbb{T}^{2})\right)$ and  $u,\:\partial_{x}u,\:\partial_{y}u\in L_{T}^{1}L_{xy}^{\infty}$. Moreover, the map data-solution $\phi\in H^{s}(\mathbb{T}^{2})\mapsto u\in C\left(\left[0,T\right];H^{s}(\mathbb{T}^{2})\right)$ is continuous.
\end{theorem}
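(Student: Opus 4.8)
The plan is to prove Theorem \ref{teoimpo} by the energy method combined with short-time (frequency-localized) Strichartz estimates, adapting the Ionescu--Kenig scheme to the present periodic, fractional setting. The starting point is an a priori energy estimate for smooth solutions of (\ref{bipe}). Testing the equation against $D^{2s}u$ in $L^2$ and using that the linear operator $\partial_x(D_x^{1+\alpha}\pm D_y^{1+\beta})$ is skew-adjoint, all dispersive contributions cancel and only the nonlinear term survives. A Kato--Ponce commutator estimate applied to $D^s(uu_x)=uD^su_x+[D^s,u]u_x$ then reduces the growth of the $H^s$ norm to
\begin{equation}\label{enest}
\frac{d}{dt}\|u(t)\|_{H^s}^2 \lesssim \|\nabla u(t)\|_{L^\infty_{xy}}\,\|u(t)\|_{H^s}^2,
\end{equation}
so that, after integration, $\|u\|_{L^\infty_T H^s}$ is controlled by $\|\phi\|_{H^s}$ as soon as $\|\nabla u\|_{L^1_T L^\infty_{xy}}$ is finite. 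The entire difficulty is thus concentrated in bounding $\|u\|_{L^1_T L^\infty_{xy}}$, $\|u_x\|_{L^1_T L^\infty_{xy}}$ and $\|u_y\|_{L^1_T L^\infty_{xy}}$ in terms of $\|u\|_{L^\infty_T H^s}$ at the claimed regularity; the naive Sobolev embedding $H^{s-1}\hookrightarrow L^\infty$ forces $s>2$, precisely the threshold we wish to lower.

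To go below $s=2$ I would establish a localized Strichartz estimate for the free evolution $W(t)=e^{t\,\partial_x(D_x^{1+\alpha}\pm D_y^{1+\beta})}$. After a Littlewood--Paley decomposition into frequencies $|m|\sim N$, $|n|\sim L$ and a partition of $[0,T]$ into subintervals of length $\sim N^{-\gamma}$ (with $\gamma$ chosen to balance the two directions), a $TT^*$/duality argument reduces the $L^\infty_{xy}$ bound on a frequency-localized piece to controlling the oscillatory kernel
\begin{equation}\label{kernel}
K_{N,L}(t,x,y)=\sum_{|m|\sim N}\sum_{|n|\sim L} e^{i\left(mx+ny+t\,m|m|^{1+\alpha}\pm t\,m|n|^{1+\beta}\right)}.
\end{equation}
Since the $n$-dependence enters only through the phase $\pm t\,m|n|^{1+\beta}$, I would first estimate the inner $n$-sum, whose phase is a \emph{fractional} power (handled by van der Corput/stationary-phase estimates for the exponent $1+\beta$, producing the factor $\tfrac32-\tfrac{\beta}{4}$), and then the remaining $m$-sum, whose phase $t\,m|m|^{1+\alpha}$ is, on each sign of $m$, a polynomial of degree $\alpha+2$ (handled by the Weyl inequality of Lemma \ref{weyl}, whose dyadic saving yields the factor $\tfrac32-\tfrac{1}{2^{\alpha+2}}$). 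The composition of these two one-dimensional gains produces exactly the product threshold $s>(\tfrac32-\tfrac{1}{2^{\alpha+2}})(\tfrac32-\tfrac{\beta}{4})$.

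With the linear estimate in hand, I would upgrade it to the nonlinear solution by a Duhamel/bootstrap argument on the short intervals: on each subinterval write $u$ as $W(t)u(t_j)$ plus the Duhamel term, estimate the latter by the same Strichartz bound applied to $\partial_x(u^2)$, sum the dyadic pieces in $N$ and $L$, and add up the $\sim N^{\gamma}T$ subintervals. Here the hypothesis $\int_{\mathbb{T}}\phi(x,y)\,dx=0$ is essential: the zero-$x$-mean modes $m=0$ are resonant, since the phase in (\ref{kernel}) carries a factor $m$ and degenerates there, so dispersion gives no smoothing; the mean-zero condition removes them, and it is preserved by the flow because $\frac{d}{dt}\int_{\mathbb{T}}u\,dx=\tfrac12\int_{\mathbb{T}}\partial_x(u^2)\,dx=0$. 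Closing this bootstrap furnishes $\|\nabla u\|_{L^1_T L^\infty_{xy}}\lesssim C(\|\phi\|_{H^s})$ for some $T=T(\|\phi\|_{H^s})$, which fed back into (\ref{enest}) yields the uniform a priori bound.

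Finally I would convert the a priori estimates into the stated well-posedness by compactness. Regularizing the data (or adding a vanishing parabolic term as in Iorio's argument) produces smooth solutions obeying the above bounds uniformly; an Aubin--Lions/Bona--Smith passage to the limit gives a solution $u\in C([0,T];H^s)$ with $u,\partial_x u,\partial_y u\in L^1_T L^\infty_{xy}$. Uniqueness follows from an energy estimate for the difference of two solutions at a lower level $H^{s'}$, and continuity of the map $\phi\mapsto u$ from a Bona--Smith approximation. The main obstacle throughout is the exponential-sum step: obtaining a sharp bound for (\ref{kernel}) when one factor is an integer polynomial (Weyl) while the other carries a genuinely fractional exponent $1+\beta$ is delicate, and it is the interplay of these two heterogeneous one-dimensional estimates---rather than either one in isolation---that dictates the final regularity threshold.
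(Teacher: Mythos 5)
Your proposal is correct and follows essentially the same route as the paper: a frequency- and time-localized Strichartz estimate proved by duality and kernel bounds (van der Corput for the fractional $n$-frequencies, after the paper's Poisson-summation step, and Weyl's inequality via Dirichlet approximation for the polynomial $m$-sum), fed through Duhamel on short subintervals to control $\Vert u\Vert_{L^1_T L^\infty_{xy}}$, $\Vert u_x\Vert_{L^1_T L^\infty_{xy}}$, $\Vert u_y\Vert_{L^1_T L^\infty_{xy}}$, combined with a Kato--Ponce energy estimate and a continuity bootstrap, and closed by regularization, compactness, and Bona--Smith. The only presentational nuance is that the Strichartz gains are $2^{-j/2^{\alpha+2}}$ and $2^{-\beta k/4}$, the product threshold $(\frac{3}{2}-\frac{1}{2^{\alpha+2}})(\frac{3}{2}-\frac{\beta}{4})$ emerging only later, as $(s_1+1)(s_2+1)$, when the anisotropic norms $J_x^{s_1}J_y^{s_2}$ are dominated by the isotropic $H^s$ norm.
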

  
Where it is observed that when the 
dispersion in the $x$ variable
increases the regularity required 
increases slightly.

The paper is organized as follows. In the following section we prove Localized Strichartz Estimate. Section 3, deals with Preliminary and Key estimates. Lastly, the main result is proved. Before setting our results, some notation is necesary:

\textbf{\textit{Notation}}. 
\begin{itemize}
	\item $a\lesssim b$ (resp. $a\gtrsim b$) means that
	there exists a positive constant $c$, such that, $a \leq cb$ (resp. $a\geq cb$). 
	\item $a\sim  b$, when $a\lesssim b$ and $a\gtrsim b$. 
	\item $C^{p}(X)$ for the function space $C^{p}$ class in X.
	\item \begin{align*}
		\left\Vert u\right\Vert_{L^{p}X}:=\left(\int_{\mathbb{R}}\left(\left\Vert u(t)\right\Vert_{X}\right)^{p}dt\right)^{\frac{1}{p}}\\
		\left\Vert u\right\Vert_{L^{\infty}X}:=ess\;sup_{t\in\mathbb{R}}\left\Vert u(t)\right\Vert_{X},
	\end{align*}
	where $X$ is Banach space,  $u:\mathbb{R} \to X$ is a measurable  function and    $p\in[1,\infty]$.
	\item \begin{align*}
		\left\Vert u\right\Vert_{L^{p}_{I}X}:= \left\Vert \chi_{I}(|t|) u\right\Vert_{L^{p}X},
	\end{align*}
	where $I\subseteq\mathbb{R}$ is interval, $\chi_{I}$ is the characteristic function of $I$ and $u:I \to X$ is a measurable  function. In particular, for $T>0$
	\begin{align*}
		\left\Vert u\right\Vert_{L^{p}_{T}X}:=\left\Vert u\right\Vert_{L^{p}_{[-T,T]}X}.
	\end{align*}
	\item 
	\begin{align*}
		\widehat{f}\left(m,n\right)= C\int_{\mathbb{T}^{2}}f\left(x,y\right)e^{-ixm}e^{-iyn}dxdy  , 
	\end{align*}
	
	where  $f\in L^{1}\left(\mathbb{T}^{2}\right)$, $\left(m,n\right)\in\mathbb{Z}^{2}$ and $C$ as a universal constant. It is the Fourier transform of $f$.
	\item  $\mathcal{S}=C^{\infty}(\mathbb{T}^{2})$.
	\item $\mathcal{S}'$ is the biperiodic distribution space.
	\item $\widehat{f}\left(m,n\right)=\langle f;\mathfrak{e}_{-m-n}\rangle$  is the Fourier transform of $f\in\mathcal{S}'$, where $\mathfrak{e}_{mn}(x,y)=e^{i(mx+ny)}$, $(m,n)\in\mathbb{Z}^{2}$, $(x,y)\in\mathbb{T}^{2}$,  $C$ as a universal constant and $\langle;\rangle$ is the duality bracket of $\mathcal{S}'$,$\mathcal{S}$.
	\item
	\begin{equation*}
		H^s(\mathbb{T}^{2}) = \{f \in \mathcal{S}' \, : \, \sum_{(m,n)\in\mathbb{Z}^{2}}(1 + m^2+n^2)^s \, |\hat{f}(m,n)|^2 \,   <\infty\},
	\end{equation*}
	where $H^s(\mathbb{T}^{2})$ denote the standard Sobolev spaces in $L^2(\mathbb{T}^{2})$  for  $s\in\mathbb{R}$,
	and
	\begin{align*}
		H^{\infty}\left(\mathbb{T}^{2}\right)=\underset{s\geq0}{\cap}H^{s}\left(\mathbb{T}^{2}
		\right).
	\end{align*}
\begin{equation*}
	\left\Vert f\right\Vert _{H^{s}}\sim\left\Vert J_{y}^{s}f\right\Vert _{L^{2}\left(\mathbb{T}\times\mathbb{T}\right)}+\left\Vert J_{x}^{s}f\right\Vert _{L^{2}\left(\mathbb{T}\times\mathbb{T}\right)} 
\end{equation*}
	\item
	For integers $k=0,1,\cdots$ we define the operators $Q_{x}^k$ and $Q_{y}^k$ on $H^\infty(\mathbb{T}^{2})$ by
	\begin{equation}
		\begin{cases}
			\widehat{Q_{x}^{0}g}\left(m,n\right)=\chi_{\left[0,1\right)}\left(\left|m\right|\right)\hat{g}\left(m,n\right)\\
			\widehat{Q_{x}^{k}g}\left(m,n\right)=\chi_{\left[2^{k-1},2^{k}\right)}\left(\left|m\right|\right)\hat{g}\left(m,n\right) & si\:k\geq1
		\end{cases}
	\end{equation}
	and
	\begin{equation}
		\begin{cases}
			\widehat{Q_{y}^{0}g}\left(m,n\right)=\chi_{\left[0,1\right)}\left(\left|n\right|\right)\hat{g}\left(m,n\right)\\
			\widehat{Q_{y}^{k}g}\left(m,n\right)=\chi_{\left[2^{k-1},2^{k}\right)}\left(\left|n\right|\right)\hat{g}\left(m,n\right) & si\:k\geq1,
		\end{cases}
	\end{equation}

	where $\chi_{I}$ is the characteristic function over $I$.
	\item We observe  the following  equivalence for the Sobolev norms on $\mathbb{T}^{2}$,
	\begin{align}
		\left\Vert g
		\right\Vert^{2}_{L^{2}(\mathbb{T}\times\mathbb{T})}\sim\sum_{k,j\geq 0}\left\Vert Q_{x}^{k}Q_{y}^{j}g\right\Vert^{2}_{L^{2}(\mathbb{T}\times\mathbb{T})}
	\end{align}
and
		\begin{align}
		\left\Vert J^{s}_{x} g
		\right\Vert^{2}_{L^{2}(\mathbb{T}\times\mathbb{T})} +\left\Vert J^{s}_{y} g
		\right\Vert^{2}_{L^{2}(\mathbb{T}\times\mathbb{T})} &\sim\sum_{k\geq0,j\geq 1}(2^{j})^{2s}\left\Vert Q_{x}^{k}Q_{y}^{j}g\right\Vert^{2}_{L^{2}(\mathbb{T}\times\mathbb{T})}\\&\nonumber+	\sum_{k\geq1,j\geq 0}(2^{k})^{2s}\left\Vert Q_{x}^{k}Q_{y}^{j}g\right\Vert^{2}_{L^{2}(\mathbb{T}\times\mathbb{T})}\\&\nonumber +\sum_{k,j\geq 0}\left\Vert Q_{x}^{k}Q_{y}^{j}g\right\Vert^{2}_{L^{2}(\mathbb{T}\times\mathbb{T})}
	\end{align}
\end{itemize}

\section{Localized Strichartz
	Estimate}
In this section, we prove Strichatrz estimate localized in frequency and time. First we recall the following lemmas.
\begin{lemma}
	[Poisson Summation Formula]\label{sumpoisson}
	Let $f$, $\hat{f}$ are in $L^{1}(\mathbb{R}^{n})$ and  satisfy the condition  
	\begin{align*}
		\left|f\left(x\right)\right|+\left|\hat{f}\left(x\right)\right|\leq C\left(1+\left|x\right|\right)^{-n-\delta}   \end{align*}
	for some constant $C$, $\delta>0$. Then   
	$f$ and  $\hat{f}$ are continuous an for all $x\in R^{n}$ we have
	\begin{align*}
		\sum_{m\in\mathbb{Z}^{n}}\widehat{f}\left(2\pi m\right)=\sum_{k\in\mathbb{Z}^{n}}f\left( k\right) .  
	\end{align*}
	
\end{lemma}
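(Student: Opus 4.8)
The plan is to reduce the identity to the classical fact that a continuous periodic function is recovered by its Fourier series, applied to the \emph{periodization} of $f$. First I would define, for $x\in\mathbb{R}^n$, the periodized function
\[
F(x):=\sum_{k\in\mathbb{Z}^n}f(x+k).
\]
The decay hypothesis $|f(x)|\le C(1+|x|)^{-n-\delta}$ guarantees, by comparison with the convergent series $\sum_{k}(1+|k|)^{-n-\delta}$, that this sum converges absolutely and uniformly on compact subsets of $\mathbb{R}^n$. Hence $F$ is well defined, continuous, and $\mathbb{Z}^n$-periodic. Evaluating at $x=0$ gives $F(0)=\sum_{k}f(k)$, the right-hand side of the claim, so the task becomes the computation of $F$ through harmonic analysis on the torus $\mathbb{R}^n/\mathbb{Z}^n$.

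Next I would compute the Fourier coefficients of $F$ with respect to the characters $e^{2\pi i m\cdot x}$, $m\in\mathbb{Z}^n$. Setting $c_m:=\int_{[0,1]^n}F(x)\,e^{-2\pi i m\cdot x}\,dx$, I would use the uniform convergence to exchange sum and integral and the periodicity $e^{-2\pi i m\cdot(x+k)}=e^{-2\pi i m\cdot x}$ for $k\in\mathbb{Z}^n$ to unfold the integral over a fundamental domain into an integral over all of $\mathbb{R}^n$:
\[
c_m=\sum_{k}\int_{[0,1]^n}f(x+k)\,e^{-2\pi i m\cdot x}\,dx=\int_{\mathbb{R}^n}f(y)\,e^{-2\pi i m\cdot y}\,dy=\widehat{f}(2\pi m),
\]
where the last equality is the definition of the Euclidean Fourier transform evaluated at $2\pi m$. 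Thus the Fourier coefficients of the periodization are exactly the sampled values $\widehat{f}(2\pi m)$ appearing on the left-hand side of the claim.

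Then I would invoke the decay hypothesis on $\widehat{f}$: since $|\widehat{f}(2\pi m)|\le C(1+2\pi|m|)^{-n-\delta}$, the series $\sum_m|c_m|$ converges, so the formal Fourier series $\sum_m c_m\,e^{2\pi i m\cdot x}$ converges absolutely and uniformly to a continuous periodic function $G$. The key step, and the only delicate one, is to identify $G$ with $F$ pointwise. I would argue this by uniqueness of Fourier coefficients: $F$ and $G$ are both continuous, and the coefficients of $G$ can be read off termwise (again by uniform convergence) to equal $c_m$, so $F$ and $G$ have identical Fourier coefficients and hence coincide everywhere, for instance via Fej\'er summation or the density of trigonometric polynomials in $C(\mathbb{R}^n/\mathbb{Z}^n)$ furnished by Stone--Weierstrass. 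Evaluating the resulting identity $F(x)=\sum_m\widehat{f}(2\pi m)\,e^{2\pi i m\cdot x}$ at $x=0$ yields $\sum_k f(k)=\sum_m\widehat{f}(2\pi m)$, which is the desired formula (and, read before specializing, the general periodization identity valid for all $x\in\mathbb{R}^n$).

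Finally, the continuity assertions in the statement follow from the same integrability: $\widehat{f}$ is continuous because $f\in L^1(\mathbb{R}^n)$ (by dominated convergence in the defining integral), while $f$ agrees with a continuous representative because $\widehat{f}\in L^1(\mathbb{R}^n)$ permits Fourier inversion. The main obstacle throughout is not any single hard estimate but the rigorous justification of the two termwise interchanges (sum versus integral, and reading off the coefficients of $G$) together with the pointwise convergence of the Fourier series of $F$; both are precisely what the absolute-summability consequences of the hypotheses on $f$ and $\widehat{f}$ are designed to supply, so once those summability facts are recorded the proof assembles directly.
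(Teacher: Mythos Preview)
Your argument is correct and is precisely the standard periodization proof of the Poisson summation formula. The paper does not supply its own proof but simply cites Grafakos, \emph{Classical Fourier Analysis}, Theorem~3.1.17, where exactly this argument---periodize $f$, identify the Fourier coefficients as $\widehat{f}(2\pi m)$, and use the decay of $\widehat{f}$ to justify pointwise Fourier-series convergence---is carried out.
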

\begin{proof}
	See \cite{grafakos2008classical}  Theorem 3.1.17
\end{proof}

\begin{lemma}[Van der Corput]\label{vander}
	Let $p\geq2$, $I
	=[a,b]$ $\varphi\in C^{p}\left(I\right)$ be a real value  function  such that $\left|\varphi^{\left(p\right)}\left(x\right)\right|\geq\lambda>0$,
	$\psi\in L^{\infty}\left(I\right)$ and $\psi'\in L^{1}\left(I\right)$. Then,
	\begin{align*}
	  \left|\int_{I}e^{i\varphi\left(x\right)}\psi\left(x\right)dx\right|\leq C_{p}\lambda^{\frac{1}{p}}\left(\left\Vert \psi\right\Vert _{L^{\infty}}+\left\Vert \psi'\right\Vert _{L^{1}}\right)  \end{align*}
\end{lemma}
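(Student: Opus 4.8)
The plan is to prove the classical Van der Corput estimate (with the genuinely decaying exponent $\lambda^{-1/p}$; the positive sign printed on $\lambda^{1/p}$ should read $\lambda^{-1/p}$, since faster oscillation must force more cancellation), in two stages. The first stage strips off the amplitude $\psi$ and reduces everything to the model integral with $\psi\equiv1$. Setting $I(x)=\int_a^x e^{i\varphi(t)}\,dt$, so that $I(a)=0$, integration by parts gives
\[
\int_I e^{i\varphi}\psi = I(b)\psi(b)-\int_a^b I(x)\,\psi'(x)\,dx .
\]
Hence a uniform bound $\sup_{x\in[a,b]}|I(x)|\le M$ immediately yields $\bigl|\int_I e^{i\varphi}\psi\bigr|\le M\bigl(\|\psi\|_{L^\infty}+\|\psi'\|_{L^1}\bigr)$, which is exactly the claimed dependence on $\psi$. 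So it suffices to prove the $\psi\equiv1$ estimate $\bigl|\int_a^x e^{i\varphi}\bigr|\le C_p\lambda^{-1/p}$ uniformly over subintervals $[a,x]\subseteq I$, on which the hypothesis $|\varphi^{(p)}|\ge\lambda$ persists.

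The second stage proves $\bigl|\int_a^b e^{i\varphi}\bigr|\le C_p\lambda^{-1/p}$ by induction on $p$, after normalizing $\varphi^{(p)}\ge\lambda$ (replace $\varphi$ by $-\varphi$, using complex conjugation). For the base case $p=1$, where $\varphi'$ is monotone and $|\varphi'|\ge\lambda$, I would write $e^{i\varphi}=(e^{i\varphi})'/(i\varphi')$ and integrate by parts,
\[
\int_a^b e^{i\varphi}=\Big[\frac{e^{i\varphi}}{i\varphi'}\Big]_a^b-\frac{1}{i}\int_a^b e^{i\varphi}\,\frac{d}{dx}\Big(\frac{1}{\varphi'}\Big)\,dx ,
\]
bounding the boundary terms by $2/\lambda$ and the integral by the total variation of $1/\varphi'$, which telescopes to $\le 1/\lambda$ thanks to the monotonicity of $\varphi'$; this gives $C_1=3$.

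For the inductive step $p-1\to p$, assume $\varphi^{(p)}\ge\lambda$, so $\varphi^{(p-1)}$ is strictly increasing and has at most one zero $c$ (possibly lying outside $I$), with $|\varphi^{(p-1)}(x)|\ge\lambda|x-c|$ by the mean value theorem. Fixing $\delta>0$, I would split $I$ into the piece $I\cap(c-\delta,c+\delta)$, of length $\le2\delta$, and its complement, which is a union of at most two intervals on each of which $|\varphi^{(p-1)}|\ge\lambda\delta$ and (automatically for $p-1\ge2$, and for $p-1=1$ because $\varphi''$ keeps a constant sign) $\varphi^{(p-1)}$ is monotone. Bounding the first piece trivially and applying the inductive hypothesis with lower bound $\lambda\delta$ on the other two yields
\[
\Big|\int_a^b e^{i\varphi}\Big|\le 2\delta+2C_{p-1}(\lambda\delta)^{-1/(p-1)} .
\]
Choosing $\delta=\lambda^{-1/p}$ balances the terms, since $(\lambda\cdot\lambda^{-1/p})^{-1/(p-1)}=\lambda^{-1/p}$, giving the estimate with $C_p=2+2C_{p-1}$.

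The first stage and the $p=1$ base are routine integrations by parts; the genuine work, and the main obstacle, is the inductive step. The delicate points there are to localize correctly the near-degeneracy of the phase (where $\varphi^{(p-1)}$ is small near $c$), to verify that the complementary "good" set consists of boundedly many intervals so the inductive hypothesis applies with the reduced lower bound $\lambda\delta$, and to optimize the cutoff $\delta$. Feeding $M=C_p\lambda^{-1/p}$ from the second stage back into the first-stage integration-by-parts bound then completes the proof with the asserted factor $\|\psi\|_{L^\infty}+\|\psi'\|_{L^1}$.
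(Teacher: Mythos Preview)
Your argument is correct (including the observation that the exponent should read $\lambda^{-1/p}$), and it is exactly the classical two-step proof---strip off the amplitude by integration by parts, then induct on $p$ by localizing around the single zero of $\varphi^{(p-1)}$ and optimizing the cutoff---given in Stein's \emph{Harmonic Analysis}, Chapter~8. The paper does not supply its own proof of this lemma but simply refers to that chapter, so your approach coincides with the source the paper defers to.
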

\begin{proof}
	See \cite{stein1993harmonic} Chapter 8.
\end{proof}
\begin{lemma}\label{weyl}
	If $h\left(x\right)=\omega_{d}x^{d}+...+\omega_{1}x+\omega_{0}$ is
	a polynomial with real coefficients and  $\left|\omega_{d}-\frac{a}{q}\right|\leq\frac{1}{q^{2}}$
	for some  $a\in\mathbb{Z}$ and  $q\in\mathbb{Z}^{+}$ with $\left(a,q\right)=1$ ($a$ and $b$ are relatively prime)
	then for any  $\delta>0$,
	\begin{align*}
		\left|{\sum}_{m=1}^{N}e^{2\pi ih\left(m\right)}\right|\leq C_{\delta,d}N^{1+\delta}\left[q^{-1}+N^{-1}+qN^{-d}\right]^{\frac{1}{2^{d-1}}}   
	\end{align*}
	where the constant $C_{\delta,d}$ only denpends on $\delta$ and $d$.
	
\end{lemma}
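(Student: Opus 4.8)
The plan is to establish this via \emph{Weyl differencing}: one successively squares the sum so as to lower the degree of the phase by one at each step, until only a linear exponential sum remains, which is summed geometrically. The Diophantine hypothesis on the leading coefficient $\omega_{d}$ enters only at the very end, through a counting argument for fractional parts. Write $S=\sum_{m=1}^{N}e^{2\pi i h(m)}$ and, for $r\in\mathbb{Z}$, let $\Delta_{r}h(x):=h(x+r)-h(x)$. First I would record the key algebraic fact: the iterated difference $\Delta_{r_{1}}\cdots\Delta_{r_{d-1}}h$ is a polynomial of degree $1$ in $x$ whose leading coefficient equals $d!\,\omega_{d}\,r_{1}\cdots r_{d-1}$ (the remaining coefficients being irrelevant). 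Applying the Cauchy--Schwarz inequality once for each of the $d-1$ differencings yields an estimate of the shape
\begin{align*}
	|S|^{2^{d-1}}\le(2N)^{2^{d-1}-d}\sum_{|r_{1}|<N}\cdots\sum_{|r_{d-1}|<N}\Bigl|\sum_{m\in I}e^{2\pi i\,\Delta_{r_{1}}\cdots\Delta_{r_{d-1}}h(m)}\Bigr|,
\end{align*}
where $I=I(r_{1},\dots,r_{d-1})$ is a subinterval of $\{1,\dots,N\}$. Since the inner phase is now linear, the inner sum is a geometric progression and is bounded by $\min\bigl(N,\,\|d!\,\omega_{d}\,r_{1}\cdots r_{d-1}\|^{-1}\bigr)$, where $\|\cdot\|$ denotes the distance to the nearest integer.

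The remaining and most delicate step is to bound $\sum_{\vec r}\min(N,\|d!\,\omega_{d}r_{1}\cdots r_{d-1}\|^{-1})$. I would reorganize this sum according to the value of the product $k=r_{1}\cdots r_{d-1}$. The tuples with $k=0$ contribute at most $O(N^{d-1})$ and are harmless, while for each nonzero $k$ with $|k|\lesssim N^{d-1}$ the number of representations is controlled by the generalized divisor function $d_{d-1}(|k|)\lesssim_{\delta}N^{\delta}$; this is precisely where the loss $N^{\delta}$ in the statement originates. Matters thus reduce to estimating $\sum_{1\le|k|\lesssim N^{d-1}}\min(N,\|d!\,\omega_{d}k\|^{-1})$. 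Here I would invoke the hypothesis $|\omega_{d}-a/q|\le q^{-2}$ with $(a,q)=1$: the number $\beta:=d!\,\omega_{d}$ inherits a rational approximation with denominator comparable to $q$ up to a factor depending only on $d$, and the standard counting estimate for such sums gives
\begin{align*}
	\sum_{k=1}^{M}\min(N,\|\beta k\|^{-1})\lesssim\Bigl(1+\tfrac{M}{q}\Bigr)\bigl(N+q\log q\bigr).
\end{align*}
The proof of this counting estimate is the heart of the matter and the main obstacle I foresee: one partitions $\{1,\dots,M\}$ into blocks of $q$ consecutive integers and uses that on each block the residues $ak\bmod q$ run over a complete residue system (as $(a,q)=1$), so that, since the error $|(\beta-a/q)k|$ varies by at most $1/q$ across a block, the fractional parts $\|\beta k\|$ cannot cluster near $0$ beyond what the approximation permits, forcing the per-block sum to be $\lesssim N+q\log q$.

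Finally I would assemble the pieces. Taking $M\sim N^{d-1}$ and absorbing $\log q\lesssim_{\delta}N^{\delta}$ (legitimate, since the regime $q>N^{d}$ makes the asserted bound trivial because $|S|\le N$), the counting bound becomes $\lesssim_{\delta}N^{\delta}(N+q+N^{d}q^{-1}+N^{d-1})$. Combining with the divisor factor and the Weyl prefactor $N^{2^{d-1}-d}$, and using $N^{1-d}\le N^{-1}$ for $d\ge2$ (the case $d=1$ being immediate), everything collapses to
\begin{align*}
	|S|^{2^{d-1}}\lesssim_{\delta,d}N^{2^{d-1}+\delta}\bigl(q^{-1}+N^{-1}+qN^{-d}\bigr).
\end{align*}
Extracting the $2^{d-1}$-th root and relabeling $\delta/2^{d-1}$ as $\delta$ yields the claim. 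By contrast with the counting step, the differencing and the geometric summation are routine once the leading coefficient $d!\,\omega_{d}r_{1}\cdots r_{d-1}$ of the iterated difference has been correctly identified.
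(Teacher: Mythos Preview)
Your proof is correct: this is the classical Weyl differencing argument, and all the steps (the iterated squaring, the identification of $d!\,\omega_{d}r_{1}\cdots r_{d-1}$ as the linear coefficient, the geometric sum bound, the divisor-function reorganization, and the counting lemma for $\sum_{k}\min(N,\|\beta k\|^{-1})$) are handled appropriately. Note, however, that the paper does not supply its own proof of this lemma at all; it simply cites Theorem~4.3 of Nathanson's \emph{Additive Number Theory}, and the argument you have outlined is precisely the one found there, so there is no alternative approach to compare against.
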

\begin{proof}
	See \cite{nathanson2013additive} Theorem 4.3.
\end{proof}
\begin{lemma}\label{weyl2}
	For any integer $\Lambda\geq1$ and any $r\in\mathbb{R}$, there are integers $q\in\left\{1,2,...,\Lambda\right\}$ and $a\in \mathbb{Z}$, $(a,q)=1$, such that
	\begin{align*}
		\left|r-\frac{a}{q}\right|\leq\frac{1}{\Lambda q}
	\end{align*}
\end{lemma}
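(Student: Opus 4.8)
The plan is to prove this purely arithmetic statement by Dirichlet's pigeonhole (box) principle, handling the coprimality requirement $(a,q)=1$ by a final reduction to lowest terms. Note that this lemma is self-contained and does not invoke any of the preceding analytic lemmas (Poisson summation, Van der Corput, or Weyl); it is the Diophantine input on which those will rely.

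First I would introduce the $\Lambda+1$ real numbers $\{jr\}$ for $j=0,1,\dots,\Lambda$, where $\{\cdot\}$ denotes the fractional part, so that each $\{jr\}\in[0,1)$. Next I would partition the unit interval into the $\Lambda$ disjoint half-open cells $[(k-1)/\Lambda,\,k/\Lambda)$, $k=1,\dots,\Lambda$. Since $\Lambda+1$ points are distributed among only $\Lambda$ cells, the pigeonhole principle forces two distinct indices $0\leq j_{1}<j_{2}\leq\Lambda$ with $\{j_{1}r\}$ and $\{j_{2}r\}$ lying in the same cell, whence $|\{j_{2}r\}-\{j_{1}r\}|<\frac{1}{\Lambda}$.

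The next step converts this into an approximation of $r$. Setting $q'=j_{2}-j_{1}$ (so $1\leq q'\leq\Lambda$) and $a'=\lfloor j_{2}r\rfloor-\lfloor j_{1}r\rfloor\in\mathbb{Z}$, the identity $\{j_{2}r\}-\{j_{1}r\}=q'r-a'$ gives $|q'r-a'|<\frac{1}{\Lambda}$; dividing by $q'$ yields $|r-a'/q'|<\frac{1}{\Lambda q'}$. Finally I would enforce coprimality: writing $d=(a',q')$ and setting $a=a'/d$, $q=q'/d$, the fraction is unchanged, $a'/q'=a/q$, and now $(a,q)=1$ with $1\leq q\leq q'\leq\Lambda$. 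Because $q\leq q'$ we have $\frac{1}{\Lambda q'}\leq\frac{1}{\Lambda q}$, so the bound survives the reduction: $|r-a/q|<\frac{1}{\Lambda q'}\leq\frac{1}{\Lambda q}$, which is in fact strictly stronger than the stated (non-strict) inequality.

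There is no genuine obstacle here; the argument is elementary. The only points requiring care are in the final reduction to lowest terms: one must check both that passing to the reduced denominator keeps $q$ in the admissible range $\{1,\dots,\Lambda\}$ and that \emph{shrinking} the denominator can only \emph{relax} the right-hand bound $\frac{1}{\Lambda q}$, so that the inequality is preserved rather than broken. The strict inequality produced by the pigeonhole step provides a comfortable margin that also covers the degenerate case $\Lambda=1$, where the only available points are $0$ and $\{r\}$, forcing $q=1$ and $|r-a|<1$.
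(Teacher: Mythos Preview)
Your proof is correct and is precisely the standard pigeonhole argument that the paper intends: the paper's own ``proof'' merely cites Dirichlet's principle (Theorem 4.1 in Nathanson), and you have reproduced that argument in full, including the reduction to lowest terms needed for the coprimality condition $(a,q)=1$.
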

\begin{proof}
	It is consequence of Dirichlet's principle, see \cite{nathanson2013additive} Theorem 4.1.
\end{proof}
Let
$\left\{W_{0}^{\alpha}\left(t\right)\right\}_{t}$ be the group associated with the linear part of the equation (\ref{bipe}). If $\int_{0}^{t}u_{0}(x,y)dx=0$ a. e. $y\in\mathbb{T}$, then, 

\begin{align*}
	W^{\alpha}_{0}\left(t\right)\phi&=\sum_{m\in\mathbb{Z}^{*}}\sum_{n\in\mathbb{Z}}\widehat{\phi}\left(\textbf{m}\right)e^{i\left[\left(\textbf{m}\textbf{x}\right)+m\left(\left|m\right|^{1+\alpha}\pm \left|n\right|^{1+\beta}\right)t\right]}+\sum_{n\in\mathbb{Z}}\widehat{\phi}\left(0,n\right)e^{iny}\\&=\sum_{m\in\mathbb{Z}^{*}}\sum_{n\in\mathbb{Z}}\widehat{\phi}\left(\textbf{m}\right)e^{i\left[\left(\textbf{m}\textbf{x}\right)+m\left(\left|m\right|^{1+\alpha}\pm \left|n\right|^{1+\beta}\right)t\right]},
\end{align*}

where  $\boldsymbol{m}=\left(m,n\right)$, $\boldsymbol{x}=\left(x,y\right)$ and $\boldsymbol{m}\cdot\boldsymbol{x}=mx+ny$. 
\begin{theorem}
	\label{striper}
	Let   $\alpha=1,2\;or\; 3 $,  $0<\beta\leq1$ and $\phi\in L^{2}\left(\mathbb{T}^{2}\right)$  
	, then for any $\epsilon>0$, 
	\[
	\left\Vert W_{0}^{\alpha}\left(.\right)Q_{y}^{k}Q_{x}^{j}\phi\right\Vert _{L^{2}_{2^{-(k+j)}}L^{\infty}\left(\mathbb{T}^{2}\right)}\lesssim_{\epsilon}2^{\left(-\frac{1}{2^{\alpha+2}}+\epsilon\right)j+\left(-\frac{\beta}{4}+\epsilon\right)k} \left\Vert Q_{y}^{k}Q_{x}^{j}\phi\right\Vert _{L^{2}\left(\mathbb{T}^{2}\right)}.
	\]
\end{theorem}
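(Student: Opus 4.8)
The plan is to prove this localized Strichartz bound by a $TT^{*}$ (duality) argument, reducing it to the estimation of an oscillatory kernel, and then to control the resulting two-dimensional exponential sum by combining the Weyl bound of Lemma \ref{weyl} in the $x$-frequency with a van der Corput estimate (Lemma \ref{vander}) in the $y$-frequency. Set $P=Q_{x}^{j}Q_{y}^{k}$, $f=P\phi$, $T=2^{-(k+j)}$, and write the dispersion relation as $\omega(m,n)=m(|m|^{1+\alpha}\pm|n|^{1+\beta})$, so that the Fourier support of $f$ is the box $|m|\sim 2^{j}$, $|n|\sim 2^{k}$. Regarding $Sf:=W_{0}^{\alpha}(\cdot)f$ as an operator $L^{2}(\mathbb{T}^{2})\to L^{2}_{T}L^{\infty}(\mathbb{T}^{2})$ and dualizing the $L^{\infty}_{xy}$ norm against $L^{1}_{xy}$, one has $\|S\|^{2}=\|SS^{*}\|$, where
\[
SS^{*}F(t)=\int_{-T}^{T}W_{0}^{\alpha}(t-t')PF(t')\,dt'
\]
is, for each fixed $t,t'$, convolution in $(x,y)$ with the kernel
\[
K_{s}(\xi,\eta)=\sum_{|m|\sim 2^{j}}\sum_{|n|\sim 2^{k}}e^{i(m\xi+n\eta+s\,\omega(m,n))},\qquad s=t-t'.
\]
By Young's inequality in $t$ together with Minkowski, this reduces the theorem to the kernel estimate
\[
\int_{-2T}^{2T}\sup_{\xi,\eta}|K_{s}(\xi,\eta)|\,ds\lesssim_{\epsilon}2^{2(-\frac{1}{2^{\alpha+2}}+\epsilon)j+2(-\frac{\beta}{4}+\epsilon)k}.
\]

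Next I would factor the kernel as $K_{s}(\xi,\eta)=\sum_{|m|\sim 2^{j}}e^{i(m\xi+sm|m|^{1+\alpha})}A_{m}(\eta,s)$ with $A_{m}(\eta,s)=\sum_{|n|\sim 2^{k}}e^{i(n\eta\pm sm|n|^{1+\beta})}$, which isolates the two one-dimensional sums. For the $x$-sum, on each half-range of $m$ of fixed sign the phase $m\xi+sm|m|^{1+\alpha}$ is a real polynomial in $m$ of degree $d:=\alpha+2\in\{3,4,5\}$ with top coefficient $s/2\pi$; choosing $q\le\Lambda$ approximating $s/2\pi$ via Dirichlet (Lemma \ref{weyl2}) and applying Weyl (Lemma \ref{weyl}) bounds this sum, uniformly in $\xi$, by $2^{j(1+\delta)}[q^{-1}+2^{-j}+q2^{-jd}]^{1/2^{d-1}}$. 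For the $y$-sum, since $0<\beta\le1$ the phase $\pm sm|n|^{1+\beta}$ has second $n$-derivative of size $\sim|sm|2^{k(\beta-1)}$, so Poisson summation (Lemma \ref{sumpoisson}) followed by van der Corput with $p=2$ (Lemma \ref{vander}) bounds $A_{m}$ uniformly in $\eta$.

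The crux is to combine the two bounds and integrate in $s$ so as to extract both gains at once. The coupling between the variables enters only through the term $\pm sm|n|^{1+\beta}$, which is linear in $m$; exploiting this, a Cauchy--Schwarz argument in the index $m$ reduces $\sum_{|m|\sim2^{j}}|A_{m}(\eta,s)|^{2}$ to geometric sums controlled by $\min(2^{j},|s\Delta|^{-1})$ with $\Delta=|n|^{1+\beta}-|n'|^{1+\beta}$, and after the resulting $\int|s|^{-1/2}\,ds$ this yields the $y$-factor $2^{-\beta k/2}$. The $x$-factor $2^{-2j/2^{\alpha+2}}$ is obtained by integrating the Weyl bound over $s\in[-2T,2T]$ through a major/minor-arc decomposition: on minor arcs the Dirichlet denominator $q$ is large and Weyl yields strong decay, while the major arcs (where $s/2\pi$ lies near a rational with small denominator) carry only small measure; balancing $q$ against $\Lambda\sim 2^{jd/2}$ produces the exponent $1/2^{d}=1/2^{\alpha+2}$, with the $\epsilon$-loss absorbing the Weyl parameter $\delta$ and the logarithmic factors. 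The boundary cases $j=0$ or $k=0$, in which a projector reduces to a low-frequency cutoff, are treated separately by the trivial bound.

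The main obstacle I anticipate is precisely this simultaneous extraction of the two gains: the natural Cauchy--Schwarz that exposes the $y$-orthogonality discards the polynomial $x$-oscillation (and conversely, bounding $|K_{s}|\le\sum_{n}|\tilde B(\xi\pm s|n|^{1+\beta},s)|$ recovers the Weyl gain but loses the $y$-cancellation). One must therefore either pair the partial Weyl sums against the van der Corput increments of $A_{m}$ by summation by parts, or run a genuinely two-variable stationary-phase/Diophantine estimate on $K_{s}$. Carrying this out while simultaneously performing the measure (major/minor-arc) estimate in $s$ that converts the $q$-dependent Weyl bound into a clean power of $2^{j}$ is where the real difficulty of the theorem lies.
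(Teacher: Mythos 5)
Your reduction coincides with the paper's: a duality/$TT^{*}$ argument leading to a pointwise bound on the exponential-sum kernel (the paper's (\ref{desprovar}), with the dyadic restriction $|t-t'|\sim 2^{-l}$, $l\ge k+j$, playing the role of your $ds$-integration), then Poisson summation (Lemma \ref{sumpoisson}) plus van der Corput (Lemma \ref{vander}) in the $n$-variable, and Dirichlet (Lemma \ref{weyl2}) plus Weyl (Lemma \ref{weyl}) in the $m$-variable. But your argument stops exactly at the point you yourself flag as ``where the real difficulty lies'': the two gains are never actually combined. Your Cauchy--Schwarz in $m$ yields only the $y$-factor (as you concede, it discards the polynomial oscillation), and the alternative you mention only in passing --- summation by parts pairing the Weyl partial sums against the increments of $A_{m}$ --- is precisely what the paper carries out in (\ref{perstri})--(\ref{remplzo}): with $a_{m}=e^{i(mx+tm^{2+\alpha})}$ and $b_{m}=\psi_{1}^{2}\left(\frac{m}{2^{j}}\right)\int_{\mathbb{R}}\psi_{1}^{2}\left(\frac{\eta}{2^{k}}\right)e^{i(y'\eta\pm tm|\eta|^{1+\beta})}d\eta$, Abel summation gives $\sum_{N}\left(\sum_{m\le N}a_{m}\right)(b_{N}-b_{N+1})$, the increment is bounded by $2^{l}2^{-2j}\left(2^{-\frac{\beta}{2}k}+2^{-k}\right)$ using the van der Corput estimate (\ref{oscila}) together with $\left|e^{\pm it|\eta|^{1+\beta}}-1\right|\le |t||\eta|^{1+\beta}$ for the differenced phase, Weyl controls $\left|\sum_{m\le N}a_{m}\right|$, and summing over $N\sim 2^{j}$ produces both factors simultaneously. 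Since this is the crux of the theorem, a proposal that lists it as an unresolved obstacle has a genuine gap, even though it correctly identifies the viable route.

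Your scheme for the $s$-integration also deviates from what is actually needed, and its bookkeeping is off. No major/minor-arc measure estimate occurs in the paper: because time is pre-localized to $|t|\le 2^{-(k+j)}$ and dyadically $|t|\sim 2^{-l}$ with $l$ in a bounded window (e.g.\ $l\in[2j,2j+2]$, $\Lambda=2^{2j+5}$ for $\alpha=1$), the smallness of $|t|$ forces the Dirichlet fraction to satisfy $a\neq 0$ and hence $q\gtrsim 2^{j}$, so every admissible $t$ sits automatically in minor-arc position and the factor $2^{l}$ in (\ref{desprovar}) absorbs the integration trivially. Moreover, the kernel-level $x$-gain must be $2^{-j/2^{\alpha+1}}=2^{-j/2^{d-1}}$, which is exactly the Weyl exponent $1/2^{d-1}$ evaluated at $q\gtrsim 2^{j}$; the exponent $1/2^{\alpha+2}$ in the statement arises only after the duality square root, not from ``balancing $q$ against $\Lambda\sim 2^{jd/2}$'', and the exponent $1/2^{d}$ you claim at the kernel level would fall short of the required $2^{-2j/2^{\alpha+2}}$. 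Finally, the boundary case $j=0$ cannot be ``treated by the trivial bound'': Bernstein gives no $2^{-\frac{\beta}{4}k}$ decay there, and the paper excludes this case altogether via the mean-zero hypothesis $\hat{\phi}(0,n)=0$ built into $W_{0}^{\alpha}$ (see the Remark following the proof), while $k=0$, $j>0$ is handled by summation by parts with $b_{m}=\psi_{1}^{2}\left(\frac{m}{2^{j}}\right)$ and Weyl alone.
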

\begin{proof}
	Let $\psi_{1}:\mathbb{R}\rightarrow\left[0,1\right]$ denote a smooth even function supported in  $\left\{ r\mid\left|r\right|\in\left[\frac{1}{4},4\right]\right\} $  and    $\psi_{1}\equiv1$
	in  $\left\{ r\mid\left|r\right|\in\left[\frac{1}{2},2\right]\right\} $. Let 
	$a\left(\boldsymbol{m}\right)=\left(Q_{y}^{k}Q_{x}^{j}\phi\right)^{\wedge}\left(\boldsymbol{m}\right)$
	, $\psi_{1}\left(\frac{m}{2^{j}}\right)\cdot\psi_{1}\left(\frac{n}{2^{k}}\right)=1$ in $\left[-2^{j+1},-2^{j-1}\right]\times\left[-2^{k+1},-2^{k-1}\right]\cup\left[2^{j-1},2^{j+1}\right]\times\left[2^{k-1}, 2^{k+1}\right]$ and    $supp\;Q_x^jQ_y^k\subset supp\;\psi^{j}_{1}\psi^{k}_{1}$, $j,k\geq1$. Then,  
	\begin{align*}
	W_{0}^{\alpha}\left(t\right)Q_{y}^{k}Q_{x}^{j}\phi=\sum_{(m,n)\in\mathbb{Z^{*}}\times\mathbb{Z}}a\left(\boldsymbol{m}\right)\psi_{1}\left(\frac{m}{2^{j}}\right)\psi_{1}\left(\frac{n}{2^{k}}\right)e^{i\left[\left(\boldsymbol{m}\cdot\boldsymbol{x}\right)+m\left(\left|m\right|^{1+\alpha}\pm \left|n\right|^{1+\beta}\right)t\right]}
	\end{align*}
	It suffice to prove that,
	\begin{align*}
		\left\Vert \chi_{\left[0,2^{-(k+j)}\right]}\left(\left|t\right|\right)\sum_{(m,n)\in\mathbb{Z^{*}}\times\mathbb{Z}}a\left(\boldsymbol{m}\right)\psi_{1}\left(\frac{m}{2^{j}}\right)\psi_{1}\left(\frac{n}{2^{k}}\right)e^{i\left[\left(\boldsymbol{m}\cdot\left(x\left(t\right),y\left(t\right)\right)\right)+m\left(\left|m\right|^{1+\alpha}\pm\left|n\right|^{1+\beta}\right)t\right]}\right\Vert _{L_{t}^{2}}
		\\ \leq C_{\epsilon}2^{\left(-\frac{1}{2^{\alpha+2}}+\epsilon\right)j+\left(-\frac{\beta}{4}+\epsilon\right)k}\left\Vert a\right\Vert _{l^{2}\left(\mathbb{Z}^{2}\right)}.
	\end{align*}
	By duality, for any $g\in L_{t}^{2}$,
	$g:\mathbb{R}\rightarrow\mathbb{R}$
	\begin{align*}
		\left\Vert \int_{\mathbb{R}}g\left(t\right)\chi_{\left[0,2^{-(k+j)}\right]}\left(\left|t\right|\right)\psi_{1}\left(\frac{m}{2^{j}}\right)\psi_{1}\left(\frac{n}{2^{k}}\right)e^{i\left[\left(\boldsymbol{m}\cdot\left(x\left(t\right),y\left(t\right)\right)\right)+m\left(\left|m\right|^{1+\alpha}\pm\left|n\right|^{1+\beta}\right)t\right]}dt\right\Vert _{l^{2}\left(\mathbb{Z}^{2}\right)}\\
		\leq C_{\epsilon}2^{\left(-\frac{1}{2^{\alpha+2}}+\epsilon\right)j+\left(-\frac{\beta}{4}+\epsilon\right)k}\left\Vert g\right\Vert _{L_{t}^{2}},
	\end{align*}
	for any measurable functions $x,y:\left[-2^{-(k+j)},2^{-(k+j)}\right]\rightarrow\mathbb{T}$. By expanding the $L^2$-norm on the left-hand, we get,
	\begin{align*}
		\left|\int_{\mathbb{R}}\int_{\mathbb{R}}g\left(t\right)g\left(t'\right)K_{k+j}\left(t,t',x,y\right)dtdt'\right|\leq C_{\epsilon}2^{\left(-\frac{1}{2^{\alpha+1}}+2\epsilon\right)j+\left(-\frac{\beta}{2}+2\epsilon\right)k}   \end{align*}
	with
	\begin{align*}
		K_{k+j}\left(t,t',x,y\right)&:=\chi_{\left[0,2^{-(k+j)}\right]}\left(\left|t\right|\right)\chi_{\left[0,2^{-(k+j)}\right]}\left(\left|t'\right|\right)\\&\sum_{(m.n)\in\mathbb{Z^{*}}\times\mathbb{Z}}\psi_{1}^{2}\left(\frac{m}{2^{j}}\right)\psi_{1}^{2}\left(\frac{n}{2^{k}}\right)e^{i\left[\left(\boldsymbol{m}\cdot\left(\boldsymbol{x}\left(t\right)-\boldsymbol{x}\left(t'\right)\right)\right)+m\left(\left|m\right|^{1+\alpha}\pm\left|n\right|^{1+\beta}\right)\left(t-t'\right)\right]}.   
	\end{align*}
	For integers $l\geq k+j$, we define;
	\begin{align*}
		K_{k+j}^{l}\left(t,t',x,y\right):=\chi_{\left(2^{-l},2\cdot2^{-l}\right]}\left(\left|t-t'\right|\right)K_{k+j}\left(t,t',x,y\right)   
	\end{align*}
	
	with $t\neq t'$ and $\;t\;,t'\;\in\left[-2^{-(k+j)},2^{-(k+j)}\right]$. Thus, it is enough to prove that,
	\begin{align}\label{desprovar}
	\left|\sum_{m\in\mathbb{Z^{*}}}\sum_{n\in\mathbb{Z}}\psi_{1}^{2}\left(\frac{m}{2^{j}}\right)e^{i\left[mx+tm\left|m\right|^{1+\alpha}\right]}\psi_{1}^{2}\left(\frac{n}{2^{k}}\right)e^{i\left[ny\pm tm\left|n\right|^{1+\beta}\right]}\right|\lesssim2^{l}2^{\left(-\frac{1}{2^{\alpha+1}}+2\epsilon\right)j+\left(-\frac{\beta}{2}+2\epsilon\right)k},   
	\end{align}
	for any 
	$x,y\in\left[0,2\pi\right)$ and   $\left|t\right|\in\left[2^{-l},2^{-l}2\right]$. The cases $k,j=0$ is inmediate, if  $j,k>0$  using the Poisson summation formula  (Lemma \ref{sumpoisson}) in the suma in $n$, we get  
	\begin{align*}
	\sum_{m\in\mathbb{Z^{*}}}\psi_{1}^{2}\left(\frac{m}{2^{j}}\right)e^{i\left[mx+tm\left|m\right|^{1+\alpha}\right]}\left(\sum_{\nu\in\mathbb{Z}}\int_{\mathbb{R}}\psi_{1}^{2}\left(\frac{\eta}{2^{k}}\right)e^{i\left[\left(y-2\pi\nu\right)\eta\pm tm\left|\eta\right|^{1+\beta}\right]}d\eta\right).   
	\end{align*}
	
	We will use integration by parts  to solve the integral term in the above expression.  We define $A:=supp\;\psi_1\left(\frac{\eta}{2^{k}}\right)=\left\{\eta : 2^{k-1}\leq\left|\eta\right|\leq2^{k+1}\right\}$,
	\begin{align}\label{integralesacotar}
		&\nonumber \int_{\mathbb{R}}\psi_{1}^{2}\left(\frac{\eta}{2^{k}}\right)e^{i\left(\left(y-2\pi\nu\right)\eta \pm tm\left|\eta\right|^{1+\beta}\right)}d\eta\\&\nonumber
		=\int_{A}\frac{\psi_{1}^{2}\left(\frac{\eta}{2^{k}}\right)}{i\left[\left(y-2\pi\nu\right)\pm\left(1+\beta\right)sgn\left(\eta\right)\left|\eta\right|^{\beta}mt\right]}\frac{d}{d\eta}\left(e^{i\left(\left(y-2\pi\nu\right)\eta \pm tm\left|\eta\right|^{1+\beta}\right)}\right)d\eta\\
		& =\frac{-1}{i}\int_{A}\left[\frac{2\cdot2^{-k}\psi_{1}\psi_{1}^{'}}{\left[\left(y-2\pi\nu\right)\pm\left(1+\beta\right)sgn\left(\eta\right)\left|\eta\right|^{\beta}mt\right]}\right.\\&\nonumber
		\left.\pm\frac{\psi_{1}^{2}\left(1+\beta\right)\beta\left|\eta\right|^{\beta-1}mt}{\left[\left(y-2\pi\nu\right)\pm\left(1+\beta\right)sgn\left(\eta\right)\left|\eta\right|^{\beta}mt\right]^{2}}\right] 
		\left(e^{i\left(\left(y-2\pi\nu\right)\eta \pm tm\left|\eta\right|^{1+\beta}\right)}\right)d\eta
	\end{align}
	For the second term of the previous integral, we have that, $\left(1+\beta\right)\beta\left|\eta\right|^{\beta-1}mt\leq4\left(2\cdot2^k\right)^{\beta-1}2^{j}2^{-(j+k)}\lesssim 2^{(\beta-2)k}$, 
	$\left(1+\beta\right)\left|\eta\right|^{\beta}mt\leq2\cdot 2^{k\beta}2^{j}2^{-(k+j)} \lesssim 2$ and    
	$y\in\left[0,2\pi\right)$, $\left|\nu\right|>100$, then $\left|\left(y-2\pi\nu\right)\pm\left(1+\beta\right)sgn\left(\eta\right)\left|\eta\right|^{\beta}mt\right|^{2}\sim \left|\nu\right|^{2}$. We get,
\begin{align*}
	\left|\int_{A}\frac{\psi_{1}^{2}(1+\beta)\beta|\eta|^{\beta-1}m t}{\left[\left(y-2\pi\nu\right)\pm (1+\beta)sgn(\eta)|\eta|^{\beta} m t\right]^{2}}
	e^{i\left(\left(y-2\pi\nu\right)\eta \pm tm|\eta|^{1+\beta}\right)}d\eta\right|<C2^{k}\frac{2^{(\beta-2) k}}{\left|\nu\right|^{2}}=C\frac{1}{\left|\nu\right|^{2}}
\end{align*}

	On the other hand, for  the first term of the right-hand side of  (\ref{integralesacotar}), we again use integration by parts,
	\begin{align*}
		&\int_{A}\left[\frac{2\cdot2^{-k}\psi_{1}\psi_{1}^{'}}{\left[\left(y-2\pi\nu\right) \pm \left(1+\beta\right)sgn\left(\eta\right)\left|\eta\right|^{\beta}mt\right]}\right]\left(e^{i\left(\left(y-2\pi\nu\right)\eta \pm tm\left|\eta\right|^{1+\beta}\right)}\right)d\eta\\
		&=\frac{1}{i}\int_{A}\left[\frac{2\cdot2^{-k}\psi_{1}\psi_{1}^{'}}{\left[\left(y-2\pi\nu\right) \pm \left(1+\beta\right)sgn\left(\eta\right)\left|\eta\right|^{\beta}mt\right]^{2}}\right]\frac{d}{d\eta}\left(e^{i\left(\left(y-2\pi\nu\right)\eta \pm tm\left|\eta\right|^{1+\beta}\right)}\right)d\eta\\
		&=\frac{-1}{i}\int_{A}\left[\frac{2\cdot2^{-2k}\left(\psi_{1}^{'}\right)^{2}+2\cdot2^{-2k}\psi_{1}\psi_{1}^{''}}{\left[\left(y-2\pi\nu\right)\pm\left(1+\beta\right)sgn\left(\eta\right)\left|\eta\right|^{\beta}mt\right]^{2}}\right.\\&\left.\pm \frac{4\psi_{1}\psi_{1}^{'}2^{-k}\left(1+\beta\right)\beta\left|\eta\right|^{\beta-1}mt}{\left[\left(y-2\pi\nu\right)\pm\left(1+\beta\right)sgn\left(\eta\right)\left|\eta\right|^{\beta}mt\right]^{3}}\right] 
		\left(e^{i\left(\left(y-2\pi\nu\right)\eta \pm tm\left|\eta\right|^{1+\beta}\right)}\right)d\eta.
	\end{align*}
	
	Following similar consideration to the second term of  (\ref{integralesacotar}). We can conclude that, if $\left|\nu\right|>100$, 
	$\left|\int_{\mathbb{R}}\psi_{1}^{2}\left(\frac{\eta}{2^{k}}\right)e^{i\left(\left(y-2\pi\nu\right)\eta
	\pm tm\left|\eta\right|^{1+\beta}\right)}d\eta\right|\leq\frac{C}{\left|\nu\right|^{2}}+\frac{C}{\left|\nu\right|^{3}}\leq\frac{C}{\left|\nu\right|^{2}}$. Then,
	\begin{align*}
		\sum_{\nu}\int_{\mathbb{R}}\psi_{1}^{2}\left(\frac{\eta}{2^{k}}\right)e^{i\left[\left(y-2\pi\nu\right)\eta \pm tm\left|\eta\right|^{1+\beta}\right]}d\eta\\=\sum_{\left|\nu\right|\leq100}\int_{\mathbb{R}}\psi_{1}^{2}\left(\frac{\eta}{2^{k}}\right)e^{i\left[\left(y-2\pi\nu\right)\eta \pm tm\left|\eta\right|^{1+\beta}\right]}d\eta+O\left(1\right).  
	\end{align*}
	
	So it is enough to estimate,
	\begin{align*}
		\left|\sum_{m=1}^{\infty}\psi_{1}^{2}\left(\frac{m}{2^{j}}\right)e^{i\left[mx+tm^{2+\alpha}\right]}\int_{\mathbb{R}}\psi_{1}^{2}\left(\frac{\eta}{2^{k}}\right)e^{i\left[y'\eta
		\pm tm\left|\eta\right|^{1+\beta}\right]}d\eta\right|
		&\lesssim2^{l}2^{\left(-\frac{1}{2^{\alpha+1}}+2\epsilon\right)j+\left(-\frac{\beta}{2}+2\epsilon\right)k},
	\end{align*}
	with $\left|t\right|\in\left[2^{-l},2^{-l+1}\right]$, $m\sim2^{k}$ and   $0\leq\beta\leq1$, being the estimate of the other sum similar.
	The estimation of the 
	following oscillatory integral, is a consequence of Lemma \ref{vander} with  $\varphi(\eta)=y'\eta \pm tm\left|\eta\right|^{1+\beta}$ and $|\eta| \sim2^{k}$,
	\begin{align}\label{oscila}
		\left|\int_{\mathbb{R}}\psi_{1}^{2}\left(\frac{\eta}{2^{k}}\right)e^{i\left[y'\eta \pm tm\left|\eta\right|^{1+\beta}\right]}d\eta\right|
		\leq\frac{2^{\frac{1-\beta}{2}k}}{\left|mt\right|^{\frac{1}{2}}}.
	\end{align}
Now, we use the summation by parts formula,
\begin{align*}
	\sum_{m=1}^{\infty}a_{m}b_{m}=\sum_{N=1}^{\infty}\left(\sum_{m=1}^{N}a_{m}\right)\left(b_{N}-b_{N+1}\right)   
\end{align*}
for any compactly supported sequences $a_{m}$ and $b_{m}$. To get,
	\begin{align}\label{perstri}
		&\left|\sum_{m=1}^{\infty}\psi_{1}^{2}\left(\frac{m}{2^{j}}\right)e^{i\left(mx+tm^{2+\alpha}\right)}
		\int_{\mathbb{R}}\psi_{1}^{2}\left(\frac{\eta}{2^{k}}\right)e^{i\left(y'\eta \pm tm\left|\eta\right|^{1+\beta}\right)}d\eta\right|\\&\nonumber\hspace{0.5cm}=\left|\sum_{N=1}^{\infty}\left(\sum_{m=1}^{N}a_{m}\right)\left(b_{N}-b_{N+1}\right)\right|.
	\end{align}
	where  $a_{m}=e^{i\left(mx+tm^{2+\alpha}\right)}$,  $b_{m}=\psi_{1}^{2}\left(\frac{m}{2^{j}}\right)\int_{\mathbb{R}}\psi_{1}^{2}\left(\frac{\eta}{2^{k}}\right)e^{i\left(y'\eta \pm tm\left|\eta\right|^{1+\beta}\right)}d\eta$ and $N\in\left[2^{j-1},2^{j+1}\right]$. 
	\begin{align*}
	\left|b_{N}-b_{N+1}\right|&\leq\left|\psi_{1}^{2}\left(\frac{N}{2^{j}}\right)-\psi_{1}^{2}\left(\frac{N+1}{2^{j}}\right)\right|\left|\int_{\mathbb{R}}\psi_{1}^{2}\left(\frac{\eta}{2^{k}}\right)e^{i\left[y'\eta \pm tN\left|\eta\right|^{1+\beta}\right]}d\eta\right|
	\\&+\left|\psi_{1}^{2}\left(\frac{N+1}{2^{j}}\right)\right|	
	\left|\int_{\mathbb{R}}\psi_{1}^{2}\left(\frac{\eta}{2^{k}}\right)e^{i\left[y'\eta \pm tN\left|\eta\right|^{1+\beta}\right]}\left(e^{\pm it\left|\eta\right|^{1+\beta}}-1\right)d\eta\right|
	\\&\leq2^{-j}\frac{2^{\frac{1-\beta}{2}k}}{\left|Nt\right|^{\frac{1}{2}}}
	+|t|\int_{\mathbb{R}}\left|\eta^{\frac{1+\beta}{2}}\psi_{1}\left(\frac{\eta}{2^{k}}\right)\right|^{2}d\eta
		\\&\leq2^{-j}\frac{2^{\frac{1-\beta}{2}k}}{\left|Nt\right|^{\frac{1}{2}}}
	+|t|2^{k}\left\Vert D^{\frac{1+\beta}{2}}\hat{\psi_{1}}\right\Vert_{0}^{2}
		\\&\leq2^{-j}\frac{2^{\frac{1-\beta}{2}k}}{\left|Nt\right|^{\frac{1}{2}}}
	+|t|2^{k}\left\Vert\hat{\psi_{1}}\right\Vert_{\frac{1+\beta}{2}}^{2}
	\\&\leq2^{l}2^{-2j}2^{-\frac{\beta}{2}k}+2^{l}2^{-2j}2^{-k}
	\end{align*}
    where $\left|t\right|\in\left[2^{-l},2^{-l+1}\right]$, $N\sim2^{j}$, $l\geq k+j$ and $\left\Vert\hat{\psi_{1}}\right\Vert_{\frac{1+\beta}{2}}<\infty$.  
   Then we must estimate,
	\begin{align}\label{remplzo}
		\left|\sum_{N=2^{j-1}}^{2^{j+1}}\left(\sum_{m=1}^{N}e^{i\left(mx+tm^{2+\alpha}\right)}\right)(b_{N}-b_{N+1})\right|&\nonumber\leq 2^{l}2^{-2j}2^{-\frac{\beta}{2}k}\sum_{N   =2^{j-1}}^{2^{j+1}}\left|\sum_{m=1}^{N}a_{m}\right|\\&\lesssim 2^{l}2^{-j}2^{-\frac{\beta}{2}k}\left|\sum_{m=1}^{N}a_{m}\right|		
	\end{align}
	
	Following the proof of the Teorema 9.3.1 de \cite{ionescu2009local}, we use Lemma of H. Weyl  in $\left|\sum_{m=1}^{N}a_{m}\right|$. If $\alpha=1$,     $h\left(m\right)=\frac{t}{2\pi}m^{3}+\frac{x}{2\pi}m$, we fix $\Lambda=2^{2j+5}$ and apply Lemma \ref{weyl2} to $r=\frac{t}{2\pi}$ then, $\left|\frac{t}{2\pi}-\frac{a}{q}\right|\leq\frac{1}{2^{2j+5}q} $.
	Since $j$ is large,  $\:N\sim2^{j}$, $l\in[2j,2j+2]$
	(the restriction $l\leq2j+2$ guarantees that $\frac{a}{q}\neq0$) and $\;2^{j}\leq q\leq2^{2j+5}$, then by Lemma \ref{weyl}, we get 
	\begin{align*}
	\left|\sum_{m=1}^{N}e^{i\left(mx+tm^{3}\right)}\right|\leq
	C_{\epsilon} (2^{j})^{1+2\epsilon}\left(\frac{1}{2^{j}}+\frac{1}{2^{j}}+\frac{2^{2j+5}}{2^{3j}}\right)^{\frac{1}{4}}\lesssim_{\epsilon} 2^{\left(\frac{3}{4}+2\epsilon\right) k}.	
	\end{align*}  
	 When $\alpha=2$,   $h\left(m\right)=\frac{t}{2\pi}m^{4}+\frac{x}{2\pi}m$, we fix $\Lambda=2^{3j}$ and apply Lemma \ref{weyl2} to $r=\frac{t}{2\pi}$ then, $\left|\frac{t}{2\pi}-\frac{a}{q}\right|\leq\frac{1}{2^{3j}q} $.
	Since $j$ is large, $\:N\sim2^{j}$, $l\in[2j,\frac{5}{2}j]$ and $\;2^{
		j}\leq q\leq2^{3j}$, then by Lemma \ref{weyl}. We get,   
	$\left|\sum_{m=1}^{N}e^{i\left(mx+tm^{4}\right)}\right|\leq C2^{\left(\frac{7}{8}+2\epsilon\right) j}$. And if $\alpha=3$,   $h\left(m\right)=\frac{t}{2\pi}m^{5}+\frac{x}{2\pi}m$, we fix $\Lambda=2^{4j}$ and apply Lemma \ref{weyl2} to $r=\frac{t}{2\pi}$ then, $\left|\frac{t}{2\pi}-\frac{a}{q}\right|\leq\frac{1}{2^{4j}q} $.
	Since $j$ is large, $\:N\sim2^{j}$, $l\in[2j,3j]$ and $\;2^{
		j}\leq q\leq2^{4j}$, then by Lemma \ref{weyl}. We get,   
	$\left|\sum_{m=1}^{N}e^{i\left(mx+tm^{5}\right)}\right|\leq C2^{\left(\frac{15}{16}+2\epsilon\right) j}$. 
	In the case $\alpha=1$. By replacing in (\ref{remplzo}),  we get,
	\begin{align*}
		\left|\sum_{m=1}^{\infty}\psi_{1}^{2}\left(\frac{m}{2^{k}}\right)e^{i\left[mx+tm^{1+\alpha}\right]}\int_{\mathbb{R}}\psi_{1}^{2}\left(\frac{\eta}{2^{k}}\right)e^{i\left[y'\eta+tm\left|\eta\right|^{1+\beta}\right]}d\eta\right|\lesssim2^{l}2^{\left(-\frac{1}{4}+2\epsilon\right)j+\left(\frac{-\beta}{2}+2\epsilon\right)k},
	\end{align*}
Analogously in the other cases.
\end{proof}
\begin{remark}
	\begin{itemize}
		\item
	
	As in the case $\alpha=2$, we have,
	
	\begin{align*}
		\sum_{m\in \mathbb{Z^{*}}}\psi_{1}^{2}\left(\frac{m}{2^{j}}\right)e^{i\left[mx+ts
			gn(m)m^{4}\right]}\int_{\mathbb{R}}\psi_{1}^{2}\left(\frac{\eta}{2^{k}}\right)e^{i\left[y'\eta
		\pm	tm\left|\eta\right|^{1+\beta}\right]}d\eta
		&\\
		=\sum_{m=1}^{\infty}\psi_{1}^{2}\left(\frac{m}{2^{j}}\right)e^{i\left[mx+tm^{4}\right]}\int_{\mathbb{R}}\psi_{1}^{2}\left(\frac{\eta}{2^{k}}\right)e^{i\left[y'\eta
		\pm
		tm\left|\eta\right|^{1+\beta}\right]}d\eta&\\+
		\sum_{m=-1}^{-\infty}\psi_{1}^{2}\left(\frac{m}{2^{j}}\right)e^{i\left[mx-tm^{4}\right]}\int_{\mathbb{R}}\psi_{1}^{2}\left(\frac{\eta}{2^{k}}\right)e^{i\left[y'\eta
		\pm
		tm\left|\eta\right|^{1+\beta}\right]}d\eta.
	\end{align*}
	So it is enough to estimate,
	\begin{align*}
		\left|\sum_{m=1}^{\infty}\psi_{1}^{2}\left(\frac{m}{2^{j}}\right)e^{i\left[mx+tm^{4}\right]}\int_{\mathbb{R}}\psi_{1}^{2}\left(\frac{\eta}{2^{k}}\right)e^{i\left[y'\eta \pm tm\left|\eta\right|^{1+\beta}\right]}d\eta\right|
		&\lesssim2^{l}2^{\left(-\frac{1}{8}+2\epsilon\right)j+\left(-\frac{\beta}{2}+2\epsilon\right)k}.
	\end{align*}
\item In the case   $\widehat{Q^{0}_{y}Q^{j}_{x}\phi}$, $j>0$,   we used  the summation by parts formula, with $a_{m}=e^{i\left(mx+tm^{2+\alpha}\right)}$,    $b_{m}=\psi_{1}^{2}\left(\frac{m}{2^{j}}\right)$ and  Lemma of H. Weyl, 
\begin{align*}
	\left\Vert W_{0}\left(.\right)Q_{y}^{0}Q_{x}^{j}\phi\right\Vert _{L^{2}_{2^{-j}}L^{\infty}\left(\mathbb{T}^{2}\right)}\lesssim_{\epsilon} 2^{\left(-\frac{1}{2^{\alpha+2}}+\epsilon\right)j}\left\Vert Q_{y}^{0}Q_{x}^{j}\phi\right\Vert _{L^{2}\left(\mathbb{T}^{2}\right)}.
\end{align*}
The case, $\widehat{Q^{k}_{y}Q^{0}_{x}\phi}$, $k\geq0$, is not contemplated, because, if  $\hat{\phi}(0,n)=0$, for all $n\in\mathbb{Z}$, then, $\hat{u}(0,n,t)=0$, for all $n\in\mathbb{Z}$ .
	\end{itemize}
\end{remark}
\section{Preliminary and Key estimates}
As a consequence of the Strichartz inequality (Theorem \ref{striper}), we obtain:
\begin{lemma}\label{unot}
	Let $\alpha=1,2\;or\;3$,  $0<\beta\leq1$, $u\in C\left(\left[0,T\right];H^{\infty}\left(\mathbb{T}^{2}\right)\right)\cap C^{1}\left(\left[0,T\right];H^{\infty}\left(\mathbb{T}^{2}\right)\right)$ and  
	$f\in C\left(\left[0,T\right];H^{\infty}\left(\mathbb{T}^{2}\right)\right)$,
	$T\in\left[0,1\right]$ such that
	\[
	\partial_{t}u-\partial_{x}\left(D_{x}^{1+\alpha}\pm D_{y}^{1+\beta}\right)u=\partial_{x}f.
	\]
	Then, 	
	\begin{equation}
		\left\Vert u\right\Vert _{L_{T}^{1}L_{xy}^{\infty}\left(\mathbb{T}^{2}\right)}\lesssim_{s_{1}s_{2}}T^{\frac{1}{2}}\left(\left\Vert J_{x}^{s_{1}}J_{y}^{s_{2}}u\right\Vert _{L_{T}^{\infty}L_{xy}^{2}\left(\mathbb{T}^{2}\right)}+\left\Vert J_{x}^{s_{1}}f\right\Vert _{L_{T}^{1}L_{xy}^{2}\left(\mathbb{T}^{2}\right)}\right),  
	\end{equation}
	for any $s_1>\frac{1}{2}-\frac{1}{2^{\alpha+2}}$ $s_2>\frac{1}{2}-\frac{\beta}{4}$ 
\end{lemma}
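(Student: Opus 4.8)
The plan is to run a Littlewood--Paley decomposition in both variables and to feed each dyadic block into the localized estimate of Theorem~\ref{striper}, after splitting $[0,T]$ into time intervals tuned to the block's frequency. Write $u_{jk}=Q_x^{j}Q_y^{k}u$ and $f_{jk}=Q_x^{j}Q_y^{k}f$, so that $u_{jk}$ has $x$-frequency $\sim 2^{j}$ and $y$-frequency $\sim 2^{k}$; the mean-zero normalization in $x$ annihilates the block $j=0$, so Theorem~\ref{striper} (whose sum runs over $m\in\mathbb{Z}^{*}$) applies to every surviving block $j\ge1$, $k\ge0$. Since the projectors are Fourier multipliers they commute with the linear operator, each block solves $\partial_t u_{jk}-\partial_x(D_x^{1+\alpha}\pm D_y^{1+\beta})u_{jk}=\partial_x f_{jk}$, and Duhamel's formula gives, on any interval $I=[t_0,t_0+\tau]$ and $t\in I$,
\begin{equation*}
u_{jk}(t)=W_0^{\alpha}(t-t_0)u_{jk}(t_0)+\int_{t_0}^{t}W_0^{\alpha}(t-t')\,\partial_x f_{jk}(t')\,dt'.
\end{equation*}
The scale $\tau=2^{-(j+k)}$ is exactly the one for which Theorem~\ref{striper} is sharp, so I would partition $[0,T]$ into $M\sim\max(1,T2^{j+k})$ consecutive intervals $I_\mu$ of that length.

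On each $I_\mu$ I apply Theorem~\ref{striper}. Writing $\sigma_{jk}=2^{(-\frac{1}{2^{\alpha+2}}+\epsilon)j+(-\frac{\beta}{4}+\epsilon)k}$, the unitarity of $W_0^{\alpha}$ on $L^2$ gives for the free term $\|W_0^{\alpha}(\cdot-t_\mu)u_{jk}(t_\mu)\|_{L^2_{I_\mu}L^\infty}\lesssim_\epsilon \sigma_{jk}\|u_{jk}(t_\mu)\|_{L^2}$, while for the Duhamel term I use Minkowski's integral inequality in $t'$ and then Theorem~\ref{striper} on the frequency-localized datum $\partial_x f_{jk}(t')$, the $x$-derivative contributing a factor $2^{j}$:
\begin{equation*}
\left\|\int_{t_\mu}^{t}W_0^{\alpha}(t-t')\,\partial_x f_{jk}(t')\,dt'\right\|_{L^2_{I_\mu}L^\infty}\lesssim_\epsilon \sigma_{jk}\,2^{j}\int_{I_\mu}\|f_{jk}(t')\|_{L^2}\,dt'.
\end{equation*}

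Next I reassemble. Passing from $L^2_{I_\mu}$ to $L^1_{I_\mu}$ costs $|I_\mu|^{1/2}\sim2^{-(j+k)/2}$ by Cauchy--Schwarz in time. Summing the free contributions over the $M\sim T2^{j+k}$ intervals, bounding $\|u_{jk}(t_\mu)\|_{L^2}\le\|u_{jk}\|_{L^\infty_T L^2}$ and using $T\le1$ to replace $T$ by $T^{1/2}$, yields the coefficient $2^{(\frac12-\frac{1}{2^{\alpha+2}}+\epsilon)j+(\frac12-\frac{\beta}{4}+\epsilon)k}$ times $T^{1/2}\|u_{jk}\|_{L^\infty_T L^2}$; the forcing contributions instead telescope, $\sum_\mu\int_{I_\mu}\|f_{jk}\|_{L^2}=\|f_{jk}\|_{L^1_T L^2}$, so the gain $2^{-(j+k)/2}$ absorbs exactly half of the $x$-derivative loss and leaves the coefficient $2^{(\frac12-\frac{1}{2^{\alpha+2}}+\epsilon)j+(-\frac12-\frac{\beta}{4}+\epsilon)k}\,\|f_{jk}\|_{L^1_T L^2}$. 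The crucial feature is that in $j$ both coefficients carry the exponent $\frac12-\frac{1}{2^{\alpha+2}}$, while in $k$ the free part carries $\frac12-\frac{\beta}{4}$ and the forcing part the strictly negative $-\frac12-\frac{\beta}{4}$, so that no $y$-derivative on $f$ is needed.

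Finally I sum over $j\ge1$, $k\ge0$. Each block obeys the uniform bounds $2^{s_1 j+s_2 k}\|u_{jk}\|_{L^\infty_T L^2}\lesssim\|J_x^{s_1}J_y^{s_2}u\|_{L^\infty_T L^2}$ and $2^{s_1 j}\|f_{jk}\|_{L^1_T L^2}\lesssim\|J_x^{s_1}f\|_{L^1_T L^2}$, since each is a single Littlewood--Paley piece of the corresponding full function (Sobolev-norm equivalence recorded in the Notation). Choosing $\epsilon$ so small that $\frac12-\frac{1}{2^{\alpha+2}}+\epsilon<s_1$ and $\frac12-\frac{\beta}{4}+\epsilon<s_2$, I factor $2^{s_1 j+s_2 k}$ out of the coefficients above and am left with convergent geometric series in the strictly negative surplus exponents (for the forcing the $k$-sum converges outright), which gives
\begin{equation*}
\|u\|_{L^1_T L^\infty}\lesssim_{s_1,s_2}T^{1/2}\|J_x^{s_1}J_y^{s_2}u\|_{L^\infty_T L^2}+\|J_x^{s_1}f\|_{L^1_T L^2}.
\end{equation*}
I expect the delicate point to be precisely this accounting of powers: one must verify that the $+\frac12$ from the time conversion, the $+1$ in $j$ from $\partial_x f$, and the gains $-\frac{1}{2^{\alpha+2}}$, $-\frac{\beta}{4}$ of Theorem~\ref{striper} land exactly at the thresholds $s_1>\frac12-\frac{1}{2^{\alpha+2}}$ and $s_2>\frac12-\frac{\beta}{4}$. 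The free term genuinely produces the prefactor $T^{1/2}$ through the count of $\sim T2^{j+k}$ subintervals against the per-interval $2^{-(j+k)/2}$, whereas the forcing term's smallness is of frequency type (the factor $2^{-(j+k)/2}$), so recasting the estimate under the single prefactor $T^{1/2}$ as stated, via $T\le1$, is the step that requires the most care.
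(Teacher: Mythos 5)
Your strategy is exactly the paper's: decompose into blocks $Q_x^{j}Q_y^{k}$, partition time at the scale tied to $2^{-(j+k)}$, apply Duhamel and Theorem \ref{striper} on each subinterval, pass from $L^2$ to $L^1$ in time by Cauchy--Schwarz, telescope the forcing contributions, and sum geometrically under $s_1>\frac12-\frac{1}{2^{\alpha+2}}$, $s_2>\frac12-\frac{\beta}{4}$. Your exponent accounting --- free part carrying $2^{(\frac12-\frac{1}{2^{\alpha+2}}+\epsilon)j+(\frac12-\frac{\beta}{4}+\epsilon)k}$, forcing part with the loss $2^{j}$ from $\partial_x$ halved by the per-interval gain $2^{-(j+k)/2}$, leaving the strictly negative $k$-exponent $-\frac12-\frac{\beta}{4}+\epsilon$ --- reproduces the paper's computation line by line, and your use of the zero $x$-mean to discard the $j=0$ block is consistent with the paper's remark following Theorem \ref{striper}.

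The one genuine gap is the prefactor $T^{1/2}$ on the forcing term, which you flagged yourself but proposed to repair in a direction that cannot work. Because you cut $[0,T]$ into $M\sim\max(1,T2^{j+k})$ intervals of \emph{fixed} length $2^{-(j+k)}$, Cauchy--Schwarz costs $2^{-(j+k)/2}$ per interval with no $T$-dependence, and after telescoping your forcing contribution is $\|J_x^{s_1}f\|_{L^1_TL^2}$ with no prefactor at all; since $T\le1$ gives $T^{1/2}\le1$, the bound you obtained is strictly \emph{weaker} than the stated one, and invoking $T\le1$ can only go from the stated bound to yours, never the reverse. The paper avoids the problem at the level of the partition: it cuts $[0,T]$ into exactly $2^{j+k}$ intervals of length $T2^{-(j+k)}$. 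Since $T\le1$, these windows are no longer than $2^{-(j+k)}$, so after a time translation (using the group property of $W_0^{\alpha}$) Theorem \ref{striper} still applies on each of them, and now the Cauchy--Schwarz step yields $(T2^{-(j+k)})^{1/2}$ per interval, so $T^{1/2}$ factors out of \emph{both} the free and the Duhamel contributions before any summation --- the free term through the count of $2^{j+k}$ subintervals, the forcing term through telescoping. With that single change of partition your proof coincides with the paper's. (For the subsequent application in Proposition \ref{constante} your weaker version would in fact suffice, because the forcing there carries its own smallness through $g(T)$; but as a proof of the lemma as stated, the $T^{1/2}$ must be produced by the length of the subintervals, not by $T\le1$.)
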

\begin{proof}
	We partition the interval $\left[0,T\right]$ into  $2^{j+k}$ equal intervals
	of length $T2^{-\left(j+k\right)}$, denote by $\left[a_{k,m},a_{k,\left(m+1\right)}\right)$,  
	$m=0,1,2\cdots2^{j+k}$. Then,
	\begin{equation}\label{246}
		\left\Vert Q_{y}^{k}Q_{x}^{j}u\right\Vert _{L_{T}^{1}L_{xy}^{\infty}}\leq\sum_{m=1}^{2^{k+j}}\left\Vert \chi_{\left[a_{k,m},a_{k,\left(m+1\right)}\right)}\left(t\right)Q_{y}^{k}Q_{x}^{j}u\right\Vert _{L_{T}^{1}L_{xy}^{\infty}}.
	\end{equation}
	By Cauchy-Schwarz inequality in (\ref{246}), 
	\begin{equation}\label{247}
		\left\Vert \chi_{\left[a_{k,m},a_{k,\left(m+1\right)}\right)}\left(t\right)Q_{y}^{k}Q_{x}^{j}u\right\Vert _{L_{T}^{1}L_{xy}^{\infty}}\lesssim\left(T2^{-(k+j)}\right)^{\frac{1}{2}}\left\Vert \chi_{\left[a_{k,m},a_{k,\left(m+1\right)}\right)}\left(t\right)Q_{y}^{k}Q_{x}^{j}u\right\Vert _{L_{T}^{2}L_{xy}^{\infty}}.  
	\end{equation}
	By  Duhamel's formula, for $t\in\left[a_{k,m},a_{k,\left(m+1\right)}\right]$ ,
	\begin{equation}\label{DUHAMEL1}
		u\left(t\right)=W_{0}^{\alpha}\left(t-a_{k,m}\right)\left(u\left(a_{k,m}\right)\right)+\int_{a_{k,m}}^{t}W_{0}^{\alpha}\left(t-s\right)\left(\partial_{x}f\left(s\right)\right)ds  
	\end{equation}
	It follows from (\ref{DUHAMEL1})  and   Theorem \ref{striper} that,
	\begin{align*}
		&\left\Vert \chi_{\left[a_{k,m},a_{k,\left(m+1\right)}\right)}\left(t\right)Q_{y}^{k}Q_{x}^{j}u\right\Vert _{L_{T}^{2}L_{xy}^{\infty}} \\& \leq\left\Vert \chi_{\left[a_{k,m},a_{k,\left(m+1\right)}\right)}\left(t\right)W_{0}^{\alpha}\left(t-a_{k,m}\right)Q_{y}^{k}Q_{x}^{j}u\left(a_{k,m}\right)\right\Vert _{L_{T}^{2}L_{xy}^{\infty}}\\
		& \hspace{2.6cm}+\left\Vert \int_{a_{k,m}}^{t}\chi_{\left[a_{k,m},a_{k,\left(m+1\right)}\right)}\left(t\right)W_{0}^{\alpha}\left(t-s\right)Q_{y}^{k}Q_{x}^{j}\partial_{x}\left(f\right)ds\right\Vert _{L_{T}^{2}L_{xy}^{\infty}}\\
		& \lesssim_{\epsilon}2^{\left(-\frac{1}{2^{\alpha+2}}+\epsilon\right)j+\left(-\frac{\beta}{4}+\epsilon\right)k}\left\Vert Q_{y}^{k}Q_{x}^{j}u\left(a_{k,m}\right)\right\Vert _{L_{xy}^{2}}
		\\&\hspace{2.6cm}+\int_{a_{k,m}}^{a_{k,m+1}}2^{\left(-\frac{1}{2^{\alpha+2}}+\epsilon\right)j+\left(-\frac{\beta}{4}+\epsilon\right)k}2^{j}\left\Vert Q_{y}^{k}Q_{x}^{j}f\right\Vert _{L_{xy}^{2}}\\
		& \lesssim_{\epsilon}2^{\left(-\frac{1}{2^{\alpha+2}}+\epsilon\right)j+\left(-\frac{\beta}{4}+\epsilon\right)k}\left\Vert Q_{y}^{k}Q_{x}^{j}u\left(a_{k,m}\right)\right\Vert _{L_{xy}^{2}}\\&\hspace{3.5cm}+2^{\left(-\frac{1}{2^{\alpha+2}}+\epsilon\right)j+\left(-\frac{\beta}{4}+\epsilon\right)k}2^{j}\left\Vert \chi_{\left[a_{k,m},a_{k,\left(m+1\right)}\right)}Q_{y}^{k}Q_{x}^{j
		}f\right\Vert _{L_{T}^{1}L_{xy}^{2}}
	\end{align*}
	Then, the left hand side of (\ref{247}) is bounded by,
	\begin{align*}
		&2^{-\frac{k+j}{2}}T^{\frac{1}{2}}
		\sum_{m=1}^{2^{k+j}}\left(2^{\left(-\frac{1}{2^{\alpha+2}}+\epsilon\right)j+\left(-\frac{\beta}{4}+\epsilon\right)k}\left\Vert Q_{y}^{k}Q_{x}^{j}u\left(a_{k,m}\right)\right\Vert _{L_{xy}^{2}}\right.\\&\hspace{4cm}\left.+2^{\left(-\frac{1}{2^{\alpha+2}}+\epsilon\right)j+\left(-\frac{\beta}{4}+\epsilon\right)k}2^{j}\left\Vert \chi_{\left[a_{km},a_{k\left(m+1\right)}\right)}Q_{y}^{k}Q_{x}^{j}f\right\Vert _{L_{T}^{1}L_{xy}^{2}}\right)\\&\leq2^{-\epsilon\frac{(j+k)}{2}}
		T^{\frac{1}{2}}\left(2^{-(k+j)}\sum_{m=1}^{2^{k+j}}\left(\left\Vert 2^{\left(\frac{1}{2}-\frac{\beta}{4}+\epsilon'\right)k}Q_{y}^{k}2^{\left(\frac{1}{2}-\frac{1}{2^{\alpha+2}}+\epsilon'\right)j}Q_{x}^{j}u\left(a_{k,m}\right)\right\Vert _{L_{xy}^{2}}\right)\right.\\&\hspace{4cm}\left.+2^{(\frac{3\epsilon}{2}-\frac{2+\beta}{4})k}\left\Vert Q_{y}^{k}2^{\left(\frac{1}{2}-\frac{1}{2^{\alpha+2}}+\epsilon'\right)j}Q_{x}^{j}f\right\Vert _{L_{T}^{1}L_{xy}^{2}}\right)
	\end{align*}
	where $2^{(\frac{3\epsilon}{2}-\frac{2+\beta}{4})k}<1$ for $k\geq1$  and  $\epsilon<\frac{2+\beta}{6}$, the result is followed.
\end{proof}
To obtain the energy estimate, we recall the periodic version of the Kato -Ponce commutador,
\begin{proposition}
	\label{CONMUTADORBIPE}
	Let $s\geq1$ and $f,g\in H^{\infty}\left(\mathbb{T}^{2}\right)$.
	Then, 
	\begin{equation*}
		\left\Vert J^{s}\left(fg\right)-fJ^{s}g\right\Vert _{0}\leq C_{s} \left\Vert J^{s}f\right\Vert _{0}\left\Vert g\right\Vert _{\infty}+\left(\left\Vert f\right\Vert _{\infty}+\left\Vert \nabla f\right\Vert _{\infty}\right)\left\Vert J^{s-1}g\right\Vert _{0}.   
	\end{equation*}
\end{proposition}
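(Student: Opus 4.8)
The plan is to pass to Fourier coefficients and run a paraproduct argument, singling out the frequency regime in which the symbol of the commutator is small and hence forces a derivative onto $f$. Writing $\langle\boldsymbol\zeta\rangle:=(1+|\boldsymbol\zeta|^{2})^{1/2}$ for $\boldsymbol\zeta\in\mathbb{Z}^{2}$, one has
\begin{equation*}
\bigl(J^{s}(fg)-fJ^{s}g\bigr)^{\wedge}(\boldsymbol\xi)=\sum_{\boldsymbol\eta\in\mathbb{Z}^{2}}\bigl(\langle\boldsymbol\xi\rangle^{s}-\langle\boldsymbol\eta\rangle^{s}\bigr)\,\widehat{f}(\boldsymbol\xi-\boldsymbol\eta)\,\widehat{g}(\boldsymbol\eta),
\end{equation*}
so the whole estimate is governed by the symbol $\sigma(\boldsymbol\xi,\boldsymbol\eta)=\langle\boldsymbol\xi\rangle^{s}-\langle\boldsymbol\eta\rangle^{s}$, which is small precisely when the frequency $\boldsymbol\xi-\boldsymbol\eta$ of $f$ is small relative to the frequency $\boldsymbol\eta$ of $g$. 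First I would introduce isotropic periodic Littlewood--Paley projections $\{P_{N}\}$ onto $|\boldsymbol\zeta|\sim N$ (with $N$ running over $\{0\}\cup 2^{\mathbb{N}}$), record the discrete Bernstein inequalities $\|J^{s}P_{N}h\|_{0}\sim\langle N\rangle^{s}\|P_{N}h\|_{0}$ and $\|J^{s}P_{N}h\|_{\infty}\lesssim\langle N\rangle^{s}\|h\|_{\infty}$, and split the double sum through Bony's decomposition $f=\sum_{M}P_{M}f$, $g=\sum_{N}P_{N}g$ into the three regimes (I) $M\le N/4$, (II) $M\sim N$, and (III) $N\le M/4$.

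In regime (I) I would group $\sum_{N}[J^{s},S_{<N}f]P_{N}g$ with $S_{<N}f=\sum_{M\le N/4}P_{M}f$. On its frequency support $|\boldsymbol\xi-\boldsymbol\eta|\ll\langle\boldsymbol\eta\rangle$, and the mean value theorem yields $|\sigma(\boldsymbol\xi,\boldsymbol\eta)|\lesssim|\boldsymbol\xi-\boldsymbol\eta|\,\langle\boldsymbol\eta\rangle^{s-1}$; since $|\boldsymbol\xi-\boldsymbol\eta|$ is the symbol of $\nabla$ on $f$ and $\langle\boldsymbol\eta\rangle^{s-1}$ that of $J^{s-1}$ on $g$, this piece is a Coifman--Meyer-type bilinear operator giving
\begin{equation*}
\bigl\|[J^{s},S_{<N}f]P_{N}g\bigr\|_{0}\lesssim\|\nabla S_{<N}f\|_{\infty}\,\|J^{s-1}P_{N}g\|_{0}.
\end{equation*}
Because the $S_{<N}$ are convolutions with uniformly $L^{1}$-bounded kernels commuting with $\nabla$, one has $\|\nabla S_{<N}f\|_{\infty}\lesssim\|\nabla f\|_{\infty}$ uniformly in $N$, and summing in $N$ via the almost orthogonality of the $P_{N}g$ produces $\lesssim\|\nabla f\|_{\infty}\|J^{s-1}g\|_{0}$. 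The additional $\|f\|_{\infty}\|J^{s-1}g\|_{0}$ term in the statement absorbs the exceptional lowest-frequency block of $f$ and the inhomogeneity of $\langle\cdot\rangle^{s}$, where $|\boldsymbol\xi-\boldsymbol\eta|\sim1$ and the gradient symbol is comparable to the identity.

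In regimes (II) and (III) the frequency of $f$ dominates, so $\langle\boldsymbol\xi\rangle,\langle\boldsymbol\eta\rangle\lesssim\langle\boldsymbol\xi-\boldsymbol\eta\rangle$ and $|\sigma(\boldsymbol\xi,\boldsymbol\eta)|\lesssim\langle\boldsymbol\xi-\boldsymbol\eta\rangle^{s}$; here I would treat $J^{s}(fg)$ and $fJ^{s}g$ separately, using Bernstein to transfer $J^{s}$ onto the high-frequency factor $P_{M}f$. In regime (III) both $\|J^{s}(P_{M}f\,S_{<M}g)\|_{0}$ and $\|P_{M}f\,J^{s}S_{<M}g\|_{0}$ are $\lesssim\|J^{s}P_{M}f\|_{0}\|g\|_{\infty}$, and since the outputs are localized at $|\boldsymbol\xi|\sim M$, almost orthogonality and Cauchy--Schwarz sum these to $\lesssim\|J^{s}f\|_{0}\|g\|_{\infty}$. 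In the resonant regime (II) the outputs are no longer frequency-separated, but for an output block $L\lesssim M$ the gain $\langle L\rangle^{s}/\langle M\rangle^{s}=2^{(L-M)s}$ is geometrically summable because $s\ge1>0$, so a Schur/Young estimate again gives $\lesssim\|J^{s}f\|_{0}\|g\|_{\infty}$.

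The main obstacle I anticipate is the bilinear multiplier estimate in regime (I): extracting exactly $\|\nabla f\|_{\infty}$ (rather than a half-derivative more, as the crude fractional Leibniz rule would give) requires the Coifman--Meyer bound for the rescaled symbol $\sigma(\boldsymbol\xi,\boldsymbol\eta)/(|\boldsymbol\xi-\boldsymbol\eta|\langle\boldsymbol\eta\rangle^{s-1})$ to hold with constants uniform across the dyadic scales, which on $\mathbb{T}^{2}$ must be obtained either by periodizing the Euclidean Coifman--Meyer theorem or by direct kernel estimates for the discrete multiplier; this is precisely the step that separates the commutator estimate from the ordinary product rule. The remaining work---verifying the discrete Bernstein and almost-orthogonality statements on $\mathbb{T}^{2}$ and performing the elementary resonant summation---is routine once $s\ge1$ is used to close regime (I) and to sum regime (II).
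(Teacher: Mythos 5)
The paper does not actually prove this proposition: its ``proof'' is a citation to Lemma 9.A.1 of Ionescu--Kenig \cite{ionescu2009local}, so you are attempting to supply an argument the paper omits. Your overall paraproduct architecture is the standard one, and most of it is sound: regime (I) via the mean-value bound $|\langle\boldsymbol\xi\rangle^{s}-\langle\boldsymbol\eta\rangle^{s}|\lesssim|\boldsymbol\xi-\boldsymbol\eta|\langle\boldsymbol\eta\rangle^{s-1}$ plus a Coifman--Meyer-type kernel estimate is exactly how the low-high piece is treated (and you correctly flag that this is the step needing uniform multiplier bounds on $\mathbb{T}^{2}$); regime (III) closes because the outputs are localized at $|\boldsymbol\xi|\sim M$, and the $J^{s}(fg)$ half of regime (II) closes by your Schur argument with the gain $2^{(l-m)s}$.

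The genuine gap is in the other half of regime (II), the Bony remainder $R(f,J^{s}g)=\sum_{M\sim N}P_{M}f\,J^{s}P_{N}g$. There the symbol weight is $\langle\boldsymbol\eta\rangle^{s}\sim\langle M\rangle^{s}$, fixed by the \emph{input} frequency of $g$, not by the output block: for an output block $L\ll M$ one has $\bigl\Vert P_{L}(P_{M}f\,J^{s}P_{N}g)\bigr\Vert_{0}\lesssim\Vert J^{s}P_{M}f\Vert_{0}\Vert g\Vert_{\infty}$ (or, using $\Vert P_{M}f\Vert_{\infty}\lesssim 2^{-m}\Vert\nabla f\Vert_{\infty}$, $\lesssim\Vert\nabla f\Vert_{\infty}\Vert J^{s-1}P_{N}g\Vert_{0}$) with \emph{no} factor $\langle L\rangle^{s}/\langle M\rangle^{s}$, since $J^{s}$ never acts on the output here. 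Without that gain, and without output orthogonality (all blocks $L\lesssim M$ are hit), the summation is an $\ell^{1}$ sum of quantities you only control in $\ell^{2}$; the Schur/Young step you invoke does not apply, and the naive sum loses a logarithm, i.e.\ it proves only the weaker bound with $\Vert J^{s-1+\epsilon}g\Vert_{0}$. This is not a cosmetic issue: in Besov language the remainder estimate needs total regularity strictly positive, and here $f\in B^{1}_{\infty,\infty}$ against $J^{s}g\in H^{-1}\cdot J^{s-1}$-regularity sits exactly at the borderline $1+(-1)=0$. Closing it requires the same bilinear Calder\'on--Zygmund/Coifman--Meyer input you reserved for regime (I) --- namely the $L^{2}\times L^{\infty}\to L^{2}$ bound for the high-high multiplier $(\langle\boldsymbol\xi\rangle^{s}-\langle\boldsymbol\eta\rangle^{s})\langle\boldsymbol\xi-\boldsymbol\eta\rangle^{-s}$, which is how Kato--Ponce's original proof (and in essence the cited Ionescu--Kenig lemma) handles it --- rather than blockwise bounds plus almost orthogonality. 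A secondary caution: your projections must be smooth; sharp Fourier cutoffs like the paper's $Q_{x}^{k}Q_{y}^{j}$ are not uniformly bounded on $L^{\infty}(\mathbb{T}^{2})$, so the inequalities $\Vert P_{N}g\Vert_{\infty}\lesssim\Vert g\Vert_{\infty}$ and $\Vert\nabla S_{<N}f\Vert_{\infty}\lesssim\Vert\nabla f\Vert_{\infty}$ you use throughout would fail for sharp cutoffs.
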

\begin{proof}
	See Lemma 9.A.1 in \cite{ionescu2009local}  
\end{proof}
\begin{lemma}[Energy estimate]\label{supper} Let $\alpha=1,2\;or\;3$, $0<\beta\leq1$ and $u$ solution of IVP (\ref{bipe}) with $\phi\in H^{\infty}\left(\mathbb{T}^{2}\right)$,
	then, for any $s\geq1$, we have, for any  $T\in\left[0,1\right]$, that
	\begin{equation*}
		\underset{0<t<T}{sup}\left\Vert u\right\Vert _{H^{s}\left(\mathbb{T}^{2}\right)}\lesssim e^{\left(\left\Vert u\right\Vert _{L_{T}^{1}L_{xy}^{\infty}\left(\mathbb{T}^{2}\right)}+\left\Vert \partial_{x}u\right\Vert _{L_{T}^{1}L_{xy}^{\infty}\left(\mathbb{T}^{2}\right)}+\left\Vert \partial_{y}u\right\Vert _{L_{T}^{1}L_{xy}^{\infty}\left(\mathbb{T}^{2}\right)}\right)}\left\Vert \phi\right\Vert _{H^{s}\left(\mathbb{T}^{2}\right)}.
	\end{equation*}
\end{lemma}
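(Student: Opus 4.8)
The plan is to prove the standard energy estimate for smooth solutions by differentiating the quantity $\|u(t)\|_{H^s}^2$ in time, controlling the resulting nonlinear terms through the Kato--Ponce commutator (Proposition~\ref{CONMUTADORBIPE}), and then invoking Gronwall's inequality. The central object is the localized energy $\|J^s u\|_0^2$ (together with the $L^2$-mass, to close $s=0$), where by the norm equivalence recorded in the Notation it suffices to control $\|J_x^s u\|_0^2 + \|J_y^s u\|_0^2$.

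First I would compute $\frac{d}{dt}\|J_x^s u\|_0^2 = 2\,\mathrm{Re}\langle J_x^s \partial_t u, J_x^s u\rangle_0$ and substitute the equation $\partial_t u = \partial_x(D_x^{1+\alpha}\pm D_y^{1+\beta})u - u u_x$. The linear dispersive term contributes $2\,\mathrm{Re}\langle J_x^s \partial_x(D_x^{1+\alpha}\pm D_y^{1+\beta})u, J_x^s u\rangle_0$, which vanishes because the operator $\partial_x(D_x^{1+\alpha}\pm D_y^{1+\beta})$ is skew-adjoint on $L^2(\mathbb{T}^2)$ (on the Fourier side its symbol $im(|m|^{1+\alpha}\pm|n|^{1+\beta})$ is purely imaginary, and $J_x^s$ commutes with it). Hence only the nonlinearity survives, and I must estimate $-2\,\mathrm{Re}\langle J_x^s(uu_x), J_x^s u\rangle_0$. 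Writing $uu_x = \tfrac12\partial_x(u^2)$, or keeping it as $uu_x$, the key move is to insert the commutator: $J_x^s(uu_x) = u\,J_x^s u_x + [J_x^s, u]u_x$. The term $\langle u\,J_x^s u_x, J_x^s u\rangle_0 = \tfrac12\langle u, \partial_x((J_x^s u)^2)\rangle_0 = -\tfrac12\langle u_x, (J_x^s u)^2\rangle_0$ after integration by parts, which is bounded by $\|u_x\|_\infty \|J_x^s u\|_0^2$. The commutator term is handled by Proposition~\ref{CONMUTADORBIPE}, giving a bound by $\big(\|J_x^s u\|_0\|u_x\|_\infty + (\|u\|_\infty + \|\nabla u\|_\infty)\|J_x^{s-1}u_x\|_0\big)\|J_x^s u\|_0$; noting $\|J_x^{s-1}u_x\|_0 \lesssim \|J_x^s u\|_0$, every term is controlled by $(\|u\|_\infty + \|\nabla u\|_\infty)\|J_x^s u\|_0^2$. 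The analysis of $\frac{d}{dt}\|J_y^s u\|_0^2$ is identical, since $J_y^s$ likewise commutes with the skew-adjoint linear operator and the commutator proposition applies verbatim with $J^s$.

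Summing the two (and the $L^2$-mass bound, which is immediate since $\frac{d}{dt}\|u\|_0^2 = -\langle \partial_x(u^2), u\rangle_0 = 0$ or is absorbed similarly) yields
\begin{equation*}
\frac{d}{dt}\|u(t)\|_{H^s}^2 \lesssim \big(\|u\|_\infty + \|\partial_x u\|_\infty + \|\partial_y u\|_\infty\big)\,\|u(t)\|_{H^s}^2 .
\end{equation*}
Setting $g(t) = \|u(t)\|_\infty + \|\partial_x u(t)\|_\infty + \|\partial_y u(t)\|_\infty$, Gronwall's inequality gives $\|u(t)\|_{H^s}^2 \lesssim \exp\!\big(C\int_0^t g(\tau)\,d\tau\big)\|\phi\|_{H^s}^2$, and taking the supremum over $t\in[0,T]$ and a square root produces exactly the claimed bound with the exponent $\|u\|_{L^1_T L^\infty_{xy}} + \|\partial_x u\|_{L^1_T L^\infty_{xy}} + \|\partial_y u\|_{L^1_T L^\infty_{xy}}$.

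The main obstacle, and the step deserving care, is verifying that the commutator bound from Proposition~\ref{CONMUTADORBIPE}—stated for the full operator $J^s$—transfers cleanly to the \emph{directional} operators $J_x^s$ and $J_y^s$ that appear through the norm equivalence. One must check that the proposition (or a routine one-variable analogue of it applied in the active variable with the transverse variable as a parameter) controls $\|J_x^s(uu_x) - u J_x^s u_x\|_0$ by the same right-hand side; this requires the appearance of the \emph{full} gradient $\nabla u$ rather than a single derivative, which is precisely why all three norms $\|u\|_\infty$, $\|\partial_x u\|_\infty$, $\|\partial_y u\|_\infty$ enter the final exponent. The skew-adjointness cancellation is the other point to state precisely: it is what removes all dispersion-order dependence from the energy inequality, leaving a clean Gronwall argument valid uniformly in $\alpha$ and $\beta$.
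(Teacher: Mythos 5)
Your proof is correct and follows essentially the same route as the paper: apply the smoothing operator to the equation, pair with the corresponding image of $u$, discard the dispersive term by skew-adjointness, estimate the nonlinearity via the commutator decomposition with Proposition \ref{CONMUTADORBIPE} together with integration by parts on the symmetric piece, and close with Gronwall's inequality. The only difference is that the paper applies the full operator $J^{s}$ directly---for which Proposition \ref{CONMUTADORBIPE} is stated verbatim---thereby avoiding the directional-commutator verification for $J_{x}^{s}$ and $J_{y}^{s}$ that you correctly flag as the delicate point of your variant (your slice-wise one-variable argument does close it, but it is an extra step the paper never needs).
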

\begin{proof}We apply $J^{s}$ to equation in (\ref{bipe}) and multiply by $J^{s}u$,
	\begin{equation*}
		\int_{\mathbb{T}^{2}} J^{s}u_{t}J^{s}udxdy-\int_{\mathbb{T}^{2}} J^{s}\partial_{x}D_{x}^{1+\alpha}uJ^{s}udxdy\pm\int_{\mathbb{T}^{2}} J^{s}\partial_{x}D_{y}^{1+\beta}uJ^{s}udxdy
	\end{equation*}
	\begin{align*}
		+\int_{\mathbb{T}^{2}} J^{s}uu_{x}J^{s}udxdy=0.
	\end{align*}
	
	By integrating by parts and applying Proposition \ref{CONMUTADORBIPE}, we get:
	\begin{align*}
		\frac{1}{2}\frac{d}{dt}\left\Vert J^{s}u\right\Vert _{0}^{2}\lesssim\left\Vert \partial_{x}u\right\Vert _{\infty}\left\Vert J^{s}u\right\Vert _{0}+\left(\left\Vert u\right\Vert _{\infty}+\left\Vert \nabla u\right\Vert _{\infty}\right)\left\Vert J^{s-1}\partial_{x}u\right\Vert _{0}.
	\end{align*}
	then, using Gronwall's inequality, we get the result.
\end{proof}
Now, we obtain an estimate of $u$, $u_{x}$ and $u_{y}$  in $L^{1}_{T}L^{\infty}_{xy}$.
\begin{theorem}
	 [Product Lemma] Let $s\geq 0$ and $f, g \in H^{\infty}(\mathbb{T}^
	2)$. Then
	\begin{equation*}
		\left\Vert J^{s}\left(fg\right)\right\Vert _{L^{p}}\leq C \left(\left\Vert J^{s}f\right\Vert _{L^{\infty}}\left\Vert g\right\Vert _{L^{p}}+\left\Vert f\right\Vert _{L^{\infty}}\left\Vert J^{s}g\right\Vert _{L^{p}}\right),  
	\end{equation*}

\end{theorem}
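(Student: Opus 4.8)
The statement is a fractional Leibniz (Kato--Ponce) estimate on the torus, so the plan is to prove it by a Littlewood--Paley / Bony paraproduct decomposition. Writing $J^s=(1-\Delta)^{s/2}$, I would first assemble from the one--dimensional blocks $Q_x^jQ_y^k$ already introduced an isotropic dyadic family $\{\Delta_N\}_N$ on $\mathbb{T}^2$ projecting onto frequencies $|(m,n)|\sim N$, with low--frequency truncations $S_N=\sum_{M\le N}\Delta_M$, and throughout use the square--function characterization $\|J^s h\|_{L^p}\sim\|(\sum_N N^{2s}|\Delta_N h|^2)^{1/2}\|_{L^p}$, valid for $1<p<\infty$. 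The product then splits by Bony's formula as $fg=\pi_f(g)+\pi_g(f)+R(f,g)$, where $\pi_f(g)=\sum_N (S_{N/8}f)(\Delta_N g)$ is the low--high paraproduct, $\pi_g(f)$ its transpose, and $R(f,g)=\sum_{M\sim N}(\Delta_M f)(\Delta_N g)$ the resonant high--high part.

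The two paraproducts are the principal terms. In each summand of $\pi_f(g)$ the output has frequency $\sim N$, so $J^s$ contributes a factor $\sim N^s$ attached to $\Delta_N g$; keeping the square function of $\{N^s\Delta_N g\}$ intact and estimating the slowly varying factor by the Hardy--Littlewood maximal function, Hölder's inequality with $\tfrac1p=\tfrac1\infty+\tfrac1p$ gives $\|J^s\pi_f(g)\|_{L^p}\lesssim\|\sup_N|S_{N/8}f|\|_{L^\infty}\,\|(\sum_N N^{2s}|\Delta_N g|^2)^{1/2}\|_{L^p}\lesssim\|f\|_{L^\infty}\|J^s g\|_{L^p}$, which is exactly the second term on the right--hand side. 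The transposed paraproduct $\pi_g(f)$ is formally symmetric, but to match the asserted norm placement one must instead put the square function of $\{N^s\Delta_N f\}$ in $L^\infty$ and the maximal function of $g$ in $L^p$, so as to produce $\|J^s f\|_{L^\infty}\|g\|_{L^p}$; this is the delicate endpoint discussed below.

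The remaining resonant term $R(f,g)$ has output frequency possibly far below the two comparable input frequencies, so $J^s$ yields no gain at the output: here I would group $R$ by output block $\Delta_L$, note that only $N\gtrsim L$ contribute, use $L^s\le N^s$ (legitimate since $s\ge0$) to transfer the derivative onto an input factor, and close by a Schur--type summation of the nearly diagonal interactions, obtaining a bound by the two terms already found. The main obstacle is precisely the endpoint at which the differentiated factor must sit in $L^\infty$ (the first term, and hence the transposed paraproduct), because the square--function characterization of $\|J^s\cdot\|_{L^\infty}$ fails and one cannot pull an $L^\infty$ scalar out of the square function. I expect to resolve this as such endpoints are customarily tamed on the torus: passing to the maximal--function formulation and exploiting that $\mathbb{T}^2$ has finite measure, so that $L^\infty\hookrightarrow L^p$ and, for the smooth $H^\infty$ data at hand, every dyadic series converges absolutely; the $L^p$ norm of the low--frequency factor then absorbs the summation that the naive $L^\infty$ square function would leave divergent. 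Collecting the three contributions yields the asserted inequality.
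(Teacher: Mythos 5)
The first thing to note is that the paper does not actually prove this lemma: its ``proof'' is a pointer to Lemma 4.2 of \cite{bustamante2019periodic}. So your paraproduct argument is by construction a different route, and the only question is whether it is complete. Its architecture is the standard one, and two of the three pieces are fine (tacitly assuming $1<p<\infty$, which the square-function characterization requires and which suffices for the paper's application): the low--high paraproduct $\pi_f(g)$ via the maximal function on $S_{N/8}f$ and the square function of $\{N^s\Delta_N g\}$ correctly produces $\left\Vert f\right\Vert_{L^{\infty}}\left\Vert J^{s}g\right\Vert_{L^{p}}$, and the resonant term goes through for $s>0$ (at $s=0$ the whole inequality is just H\"older) by transferring $L^{s}\leq N^{s}$ onto an input block.

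The genuine gap is exactly at the point you yourself flag, and the repair you propose does not close it. For $\pi_g(f)=\sum_N (S_{N/8}g)\,\Delta_N f$ one is led to $\bigl\Vert\bigl(\sum_N N^{2s}|S_{N/8}g|^{2}|\Delta_N f|^{2}\bigr)^{1/2}\bigr\Vert_{L^{p}}$; Bernstein gives the uniform bound $N^{s}\left\Vert\Delta_N f\right\Vert_{L^{\infty}}\lesssim\left\Vert J^{s}f\right\Vert_{L^{\infty}}$, but extracting it leaves $\bigl(\sum_N|S_{N/8}g|^{2}\bigr)^{1/2}$, which diverges because $S_{N/8}g\to g$ and the summands do not decay in $N$. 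Your two rescues fail for structural reasons. First, the constant $C$ must be independent of $f$ and $g$: the lemma is used in Proposition \ref{constante} along approximating sequences that are bounded only in $H^{s}$, so the qualitative absolute convergence of dyadic series for the particular $H^{\infty}$ data is inadmissible in the estimate --- any bound obtained that way degrades with higher norms of the data. Second, finiteness of $\mathbb{T}^{2}$ and $L^{\infty}\hookrightarrow L^{p}$ act in the measure variable, not the frequency variable; they do nothing to make the $\ell^{2}_{N}$ sum of the essentially $N$-independent factors $|S_{N/8}g|$ summable, and there is no mechanism by which ``the $L^{p}$ norm of the low-frequency factor absorbs the summation.'' This $L^{\infty}$ placement of the differentiated factor is a known endpoint of Kato--Ponce type that needs a different idea, e.g.: for integer $s$, the Leibniz rule combined with Landau--Kolmogorov/Gagliardo--Nirenberg interpolation at both exponents, $\left\Vert\partial^{\delta}f\right\Vert_{L^{\infty}}\lesssim\left\Vert f\right\Vert_{L^{\infty}}^{1-|\delta|/s}\left\Vert J^{s}f\right\Vert_{L^{\infty}}^{|\delta|/s}$ and $\left\Vert\partial^{\gamma-\delta}g\right\Vert_{L^{p}}\lesssim\left\Vert g\right\Vert_{L^{p}}^{1-\theta}\left\Vert J^{s}g\right\Vert_{L^{p}}^{\theta}$, followed by Young's inequality, with fractional $s$ recovered by interpolation; or the endpoint Kato--Ponce machinery of Grafakos and Oh transferred to the torus. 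This difficulty is presumably why the paper outsources the statement to \cite{bustamante2019periodic} instead of giving a paraproduct proof.
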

\begin{proof}
	See Lemma 4.2 in \cite{bustamante2019periodic}
\end{proof}
\begin{proposition}\label{constante}
	Let $\alpha=1,2\;or\;3$, $0<\beta\leq1$, $u$  be a  solution del IVP (\ref{bipe}) with  $\phi\in H^{\infty}\left(\mathbb{T}^{2}\right)$, then  for any  $s>(\frac{3}{2}-\frac{1}{2^{\alpha+2}})(\frac{3}{2}-\frac{\beta}{4})$,  there exists  $T=T(\left\Vert \phi\right\Vert _{s},s)$ and  a constant $C_{T}(\left\Vert \phi\right\Vert _{s},s)$ such that,
	\begin{align*}
		g\left(T\right):=\int_{0}^{T}\left(\left\Vert u\right\Vert _{L_{xy}^{\infty}\left(\mathbb{T}^{2}\right)}+\left\Vert u_{x}\right\Vert _{L_{xy}^{\infty}\left(\mathbb{T}^{2}\right)}+\left\Vert u_{y}\right\Vert _{L_{xy}^{\infty}\left(\mathbb{T}^{2}\right)}\right)dt'\leq C_{T}
	\end{align*}
\end{proposition}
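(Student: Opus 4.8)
The plan is to run a bootstrap (continuity) argument coupling the frequency-localized smoothing estimate of Lemma \ref{unot} with the energy estimate of Lemma \ref{supper}, and to close the loop for $T$ small depending on $\|\phi\|_{H^s}$. First I would recast the three quantities entering $g(T)$ as solutions of the same linear equation with an $\partial_x$-forcing. Since $uu_x=\tfrac12\partial_x(u^2)$, the solution of (\ref{bipe}) satisfies $\partial_t u-\partial_x(D_x^{1+\alpha}\pm D_y^{1+\beta})u=\partial_x f$ with $f=-\tfrac12 u^2$. Differentiating (\ref{bipe}) in $x$ gives the same linear operator applied to $u_x$ with forcing $\partial_x f_1$, $f_1=-uu_x$; differentiating in $y$ and using the identity $\partial_y(uu_x)=\partial_x(uu_y)$ puts the equation for $u_y$ in the required form with forcing $\partial_x f_2$, $f_2=-uu_y$. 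Hence Lemma \ref{unot} applies to each of $u$, $u_x$, $u_y$.

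Applying Lemma \ref{unot} with $s_1>\tfrac12-\tfrac1{2^{\alpha+2}}$ and $s_2>\tfrac12-\tfrac\beta4$ to the three functions bounds $g(T)$ by $T^{1/2}$ times a sum of linear terms $\|J_x^{s_1}J_y^{s_2}u\|_{L^\infty_TL^2}$, $\|J_x^{s_1+1}J_y^{s_2}u\|_{L^\infty_TL^2}$, $\|J_x^{s_1}J_y^{s_2+1}u\|_{L^\infty_TL^2}$, together with the forcing terms $\|J_x^{s_1}f\|_{L^1_TL^2}$, $\|J_x^{s_1}f_1\|_{L^1_TL^2}$, $\|J_x^{s_1}f_2\|_{L^1_TL^2}$. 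The linear anisotropic norms are dominated by $\sup_{[0,T]}\|u\|_{H^s}$ through the norm equivalence $\|g\|_{H^s}\sim\|J_x^sg\|_{L^2}+\|J_y^sg\|_{L^2}$, while the quadratic forcing terms are handled by the Product Lemma in the $x$-variable, e.g. $\|J_x^{s_1}(u\,u_x)\|_{L^2}\lesssim\|J_x^{s_1}u\|_{L^\infty_{xy}}\|u_x\|_{L^2}+\|u\|_{L^\infty_{xy}}\|J_x^{s_1+1}u\|_{L^2}$, the $L^\infty_{xy}$ factors being controlled by an anisotropic Sobolev embedding. Collecting everything yields a bound of the schematic form $g(T)\lesssim T^{1/2}\big(\sup_{[0,T]}\|u\|_{H^s}+\sup_{[0,T]}\|u\|_{H^s}^2\big)$, valid precisely under the hypothesis $s>(\tfrac32-\tfrac1{2^{\alpha+2}})(\tfrac32-\tfrac\beta4)$.

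Finally I would close with Lemma \ref{supper}, which gives $\sup_{[0,T]}\|u\|_{H^s}\lesssim e^{g(T)}\|\phi\|_{H^s}$; substituting produces the self-improving inequality $g(T)\lesssim T^{1/2}\big(e^{g(T)}\|\phi\|_{H^s}+e^{2g(T)}\|\phi\|_{H^s}^2\big)$. Since $T\mapsto g(T)$ is continuous and nondecreasing with $g(0)=0$, a standard continuity argument yields a time $T=T(\|\phi\|_{H^s})>0$ and a constant $C_T=C_T(\|\phi\|_{H^s},s)$ with $g(T)\le C_T$, which is the claim.

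The main obstacle is twofold. The regularity bookkeeping is the delicate analytic point: the exponent $s>(\tfrac32-\tfrac1{2^{\alpha+2}})(\tfrac32-\tfrac\beta4)$ is exactly what guarantees that every anisotropic norm produced is dominated by $\|u\|_{H^s}$; the multiplicative (rather than additive) structure of the threshold arises when the $L^\infty_{xy}$ factors inside the Product Lemma are estimated by combining the two separate Strichartz gains $\tfrac1{2^{\alpha+2}}$ in $x$ and $\tfrac\beta4$ in $y$ with the anisotropic embedding in both variables simultaneously. The second difficulty is structural: because the energy estimate reinserts $g(T)$ into the right-hand side through the exponential, the inequality cannot be solved directly and must be closed by the continuity argument in $T$, choosing $T$ small in terms of $\|\phi\|_{H^s}$.
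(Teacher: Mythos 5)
Your proposal is correct and shares the paper's skeleton: Lemma \ref{unot} applied to $u$, $u_x$, $u_y$ with forcings $\tfrac12u^2$, $uu_x$, $uu_y$ (using $\partial_y(uu_x)=\partial_x(uu_y)$, exactly as the paper does), the Product Lemma on the quadratic terms, then the energy estimate of Lemma \ref{supper} and a continuity argument in $T$. It differs, however, in two substeps, and you misidentify where the threshold comes from. In the paper the multiplicative exponent $(\tfrac32-\tfrac1{2^{\alpha+2}})(\tfrac32-\tfrac\beta4)$ originates in the \emph{linear} terms: the mixed weight $(1+m^2)^{s_1+1}(1+n^2)^{s_2}$ is split by Young's inequality with $p=\tfrac{s_2+1}{s_2}$, $q=s_2+1$, giving $\left\Vert J_x^{(s_1+1)(s_2+1)}u\right\Vert_{L^2}+\left\Vert J_y^{s_2+1}u\right\Vert_{L^2}\lesssim\left\Vert u\right\Vert_{H^s}$ --- not, as you assert, from combining the two Strichartz gains inside the $L^\infty_{xy}$ factors of the nonlinear estimate. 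Your direct bound via $(1+m^2)^{s_1+1}(1+n^2)^{s_2}\leq(1+m^2+n^2)^{s_1+s_2+1}$ needs only $s\geq s_1+s_2+1$, which the stated hypothesis implies with room to spare, so nothing breaks (though your word ``precisely'' is inaccurate --- your chain of estimates does not use the full strength of the multiplicative condition, and would in fact close below it). Second, the paper does not invoke Sobolev embedding on the $L^\infty_{xy}$ factors: it keeps them under the time integral as $L^1_TL^\infty_{xy}$ quantities absorbed back into $g(T)$, arriving at $g(T)\lesssim\left\Vert\phi\right\Vert_{H^s}e^{g(T)}\left(1+g(T)\right)$, whereas your embedding route (legitimate here, since the threshold exceeds $s_1+1$) yields $g(T)\lesssim T^{1/2}\left(M+M^2\right)$ with $M=\sup_{[0,T]}\left\Vert u\right\Vert_{H^s}$; both self-improving inequalities close by the same bootstrap, so this deviation is harmless --- your variant trades the paper's quadratic-in-$g$ right-hand side for a quadratic-in-$M$ one, at the cost of an extra (easily affordable) derivative in the embedding.
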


\begin{proof}
	We apply Lemma \ref{unot}  with $s_1>\frac{1}{2}-\frac{1}{2^{\alpha+2}}$ and  $s_2>\frac{1}{2}-\frac{\beta}{4}$  in  $u$,
	$\partial_xu$,  $\partial_{y}u$ and respectively by $f=\frac{1}{2}u^{2},\frac{1}{2}\partial_xu^{2},\frac{1}{2}\partial_yu^{2}$. 
	We get,
	\begin{align*}
		&\left\Vert u\right\Vert _{L_{T}^{1}L^{\infty}}+\left\Vert u_{x}\right\Vert _{L_{T}^{1}L^{\infty}}+\left\Vert u_{y}\right\Vert _{L_{T}^{1}L^{\infty}}\\&
		\lesssim_{s}T^{\frac{1}{2}}\left(\left\Vert J_{x}^{s_{1}}J_{y}^{s_{2}}u\right\Vert _{L_{T}^{\infty}L^{2}}+\left\Vert J_{x}^{s_{1}}J_{y}^{s_{2}}\partial_{x}u\right\Vert _{L_{T}^{\infty}L^{2}}+\left\Vert J_{x}^{s_{1}}J_{y}^{s_{2}}\partial_{y}u\right\Vert _{L_{T}^{\infty}L^{2}}\right.\\&+ \left.\left\Vert J_{x}^{s_{1}}u^{2}\right\Vert _{L_{T}^{1}L^{2}}+\left\Vert J_{x}^{s_{1}} \partial_{x}\left(u^{2}\right)\right\Vert _{L_{T}^{1}L^{2}}+\left\Vert J_{x}^{s_{1}} \partial_{y}\left(u^{2}\right)\right\Vert _{L_{T}^{1}L^{2}}\right)  
	\end{align*}
The first three terms above can be bounded by Young's inequality with
	 $p=\frac{s_{2}+1}{s_{2}}$ and $q=s_{2}+1$,  $p=\frac{s_{1}+1}{s_{1}}$  and $q=s_{1}+1$, $p=q=2$ respectively,
	\begin{align*}
		\left\Vert J_{x}^{s_{1}}J_{y}^{s_{2}}\partial_{x}u\right\Vert _{L^{2}} & \lesssim \sum_{(m,n)\in\mathbb{Z}^{2}}\left(1+m^{2}\right)^{s_{1}+1}\left(1+n^{2}\right)^{s_{2}}|\widehat{u}|^{2}\\
		& \lesssim\left\Vert \left(1+m^{2}\right)^{\frac{(s_{1}+1)(s_{2}+1)}{2}}\widehat{u}\right\Vert _{L^{2}}+\left\Vert \left(1+n^{2}\right)^{\frac{s_{2}+1}{2}}\widehat{u}\right\Vert _{L^{2}}\\
		& \lesssim\left\Vert J_{x}^{(s_{2}+1) (s_{1}+1)}u\right\Vert _{L^{2}}+\left\Vert J_{y}^{s_{2}+1}u\right\Vert _{L^{2}} 
		\\&\lesssim\left\Vert u\right\Vert_{s},
	\end{align*}   
	
	\begin{align*}
		\left\Vert J_{x}^{s_{1}}J_{y}^{s_{2}}\partial_{y}u\right\Vert _{L^{2}} & \lesssim \sum_{(m,n)\in\mathbb{Z}^{2}}\left(1+m^{2}\right)^{s_{1}}\left(1+n^{2}\right)^{s_{2}+1}|\widehat{u}|^{2}\\
		& \lesssim\left\Vert \left(1+\xi^{2}\right)^{\frac{s_{1}+1}{2}}\widehat{u}\right\Vert _{L^{2}}+\left\Vert \left(1+n^{2}\right)^{\frac{\left(s_{1}+1\right)\left(s_{2}+1\right)}{2}}\widehat{u}\right\Vert _{L^{2}}\\
		& \lesssim\left\Vert J_{x}^{s_{1}+1}u\right\Vert _{L^{2}}+\left\Vert J_{y}^{(s_{2}+1)(s_{1}+1)}u\right\Vert _{L^{2}}
		\\& \lesssim\left\Vert u\right\Vert_{s} 
	\end{align*}
	and
	\begin{align*}
		\left\Vert J_{x}^{s_{1}}J_{y}^{s_{2}}u\right\Vert _{L^{2}}  =\sum_{(m,n)\in\mathbb{Z}^{2}} \left(1+n^{2}\right)^{s_{1}}\left(1+n^{2}\right)^{s_{2}}|\widehat{u}|^{2}
		& \lesssim\left\Vert \left(1+\xi^{2}\right)^{s_{1}}\widehat{u}\right\Vert _{L^{2}}+\left\Vert \left(1+n^{2}\right)^{s_{2}}\widehat{u}\right\Vert _{L^{2}}\\
		& \lesssim\left\Vert J_{x}^{2s_{1}}u\right\Vert _{L^{2}}+\left\Vert J_{y}^{2s_{2}}u\right\Vert _{L^{2}}
		\\& \lesssim\left\Vert u\right\Vert_{s}
	\end{align*}
Applying Leibniz's product rule periodic version and Young's inequality,
	\begin{align*}
		\int_{0}^{T}\left\Vert J_{x}^{s_{1}}u^{2}\right\Vert _{L^{2}}dt'&\lesssim\int_{0}^{T}\left\Vert u\right\Vert _{L^{\infty}}\left\Vert J_{x}^{s_{1}}u\right\Vert _{L^{2}}dt'\\&\leq\left\Vert u\right\Vert _{L_{T}^{1}L^{\infty}}\left\Vert J_{x}^{s_{1}}u\right\Vert _{L_{T}^{\infty}L^{2}} \\&\leq \left\Vert u\right\Vert _{L_{T}^{1}L^{\infty}}\left\Vert u\right\Vert _{L_{T}^{\infty}H^{s}},
	\end{align*}
	\begin{align*}
		\int_{0}^{T}\left\Vert J_{x}^{s_{1}}\left(u\partial_{x}u\right)\right\Vert _{L^{2}} dt'& \lesssim\int_{0}^{T}\left\Vert \partial_{x}u\right\Vert _{L^{2}}\left\Vert J_{x}^{s_{1}}u\right\Vert _{L^{\infty}}+\left\Vert J_{x}^{s_{1}}\partial_{x}u\right\Vert _{L^{2}}\left\Vert u\right\Vert _{L^{\infty}}dt'\\
		& \leq\left\Vert \partial_{x}u\right\Vert _{L_{T}^{\infty}L^{2}}\left\Vert J_{x}^{s_{1}}u\right\Vert _{L_{T}^{1}L^{\infty}}+\left\Vert J_{x}^{s_{1}}\partial_{x}u\right\Vert _{L_{T}^{\infty}L^{2}}\left\Vert u\right\Vert _{L_{T}^{1}L^{\infty}}\\
		& \leq\left\Vert u\right\Vert _{L_{T}^{\infty}H^{s}}\left\Vert J_{x}^{s_{1}}u\right\Vert _{L_{T}^{1}L^{\infty}}+\left\Vert u\right\Vert _{L_{T}^{\infty}H^{s}}\left\Vert u\right\Vert _{L_{T}^{1}L^{\infty}}
	\end{align*}
	and
	\begin{align*}
		\int_{0}^{T}\left\Vert J_{x}^{s_{1}}\left(u\partial_{y}u\right)\right\Vert _{L^{2}} dt'& \lesssim\int_{0}^{T}\left\Vert \partial_{y}u\right\Vert _{L^{2}}\left\Vert J_{x}^{s_{1}}u\right\Vert _{L^{\infty}}+\left\Vert J_{x}^{s_{1}}\partial_{y}u\right\Vert _{L^{2}}\left\Vert u\right\Vert _{L^{\infty}}dt'\\
		& \leq\left\Vert \partial_{y}u\right\Vert _{L_{T}^{\infty}L^{2}}\left\Vert J_{x}^{s_{1}}u\right\Vert _{L_{T}^{1}L^{\infty}}+\left\Vert J_{x}^{s_{1}}\partial_{y}u\right\Vert _{L_{T}^{\infty}L^{2}}\left\Vert u\right\Vert _{L_{T}^{1}L^{\infty}}\\
		& \leq\left\Vert u\right\Vert _{L_{T}^{\infty}H^{s}}\left\Vert J_{x}^{s_{1}}u\right\Vert _{L_{T}^{1}L^{\infty}}+\left\Vert u\right\Vert _{L_{T}^{\infty}H^{s}}\left\Vert u\right\Vert _{L_{T}^{1}L^{\infty}}.
	\end{align*}

	Then adding the previous inequalities and applying the respective energy estimate (Lemma \ref{supper}) we obtain the inequality,
	\begin{align*}
		g\left(T\right)\lesssim\left\Vert \phi\right\Vert _{s}e^{g\left(T\right)}\left(1+g\left(T\right)\right).
	\end{align*}
	To complete the proof, by an argument of continuity if $T\leq T_{0}C_{T}(\left\Vert \phi\right\Vert _{s},s)$ is small enough,
	$g(T)\leq C_{T}(\left\Vert \phi\right\Vert _{s},s)$
\end{proof}
In this point, we can use standard compactness arguments as in Kenig \cite{kenig2015well} for proving Theorem.

\section{Proof of Theorem 1.1}
Let $s>(\frac{3}{2}-\frac{1}{2^{\alpha+2}})(\frac{3}{2}-\frac{\beta}{4})$,  $\phi\;\in H^{s}\left(\mathbb{T}^{2}\right)$. We consider by density, $\phi_{\gamma}\;\in H^{\infty}\;\cap\; H^{s}$ such that $\underset{\gamma\rightarrow\infty}{lim}\left\Vert \phi_{\gamma}-\phi\right\Vert _{H^{s}\left(\mathbb{T}^{2}\right)}=0$ and $\left\Vert \phi_{\gamma}\right\Vert_{H^{s}\left(\mathbb{T}^{2}\right)}\leq C\left\Vert \phi\right\Vert _{H^{s}\left(\mathbb{T}^{2}\right)}$. Let  $\left\{u_{\gamma}\right\}$ solutions associated to the initial data $\left\{\phi_{\gamma}\right\}$ such that $u_{\gamma}\in C\left(\left[0,T'\right];H^{\infty}\left(\mathbb{T}^{2}\right)\right)$, $T'>0$  guaranteed by the local well-posedness of \ref{bipe} in $H^{s}(\mathbb{T}^{2})$ for $s>2$.  We can extend $u_{\gamma}$ on a time interval $[0,T]$,  $T=T\left(\left\Vert \phi\right\Vert _{H^{s}},s\right)$ by Proposition \ref{constante} and also we show that  there is a constant $C_{T}$  such that,
\begin{align}\label{desinfinito2}
	\int_{0}^{T}\left(\left\Vert u_{\gamma}\right\Vert _{L_{xy}^{\infty}}+\left\Vert \partial_{x} u_{\gamma}\right\Vert _{L_{xy}^{\infty}}+\left\Vert \partial_{y}u_{\gamma}\right\Vert _{L_{xy}^{\infty}}\right)dt\leq C_{T}
\end{align}
We deduce  from energy estimate (Lemma \ref{supper}) and previous inequality  (\ref{desinfinito2}) that,
\begin{align}\label{ct22}
	\underset{0<t<T}{sup}\left\Vert u_{\gamma}\right\Vert _{H^{s}\left(\mathbb{T}^{2}\right)}\leq C_{T}
\end{align}
by using Gronwall's inequality and inequality (\ref{desinfinito2}) we get,
\begin{align}\label{ldos}
	\underset{\gamma,\mu\rightarrow\infty}{lim}\underset{0<t<T}{sup}\left\Vert u_{\gamma}-u_{\mu}\right\Vert _{0}=0  
\end{align}
Now we consider  the inequality (\ref{ct22}) and (\ref{ldos}), we can find  $u\;\in C\left(\left[0,T\right];H^{s_{1}}\left(\mathbb{T}^{2}\right)\right)\;\cap\; L^{\infty}\left(\left[0,T\right];H^{s}\left(\mathbb{T}^{2}\right)\right)$ with $s_{1}<s$, such that $u_{\gamma}\rightarrow u$ in $C\left(\left[0,T\right];H^{s_{1}}\left(\mathbb{T}^{2}\right)\right)$. By  (\ref{ldos}), $u_{\gamma}\rightarrow u$ in $ C\left(\left[0,T\right];L^{2}\left(\mathbb{T}^{2}\right)\right)$. Hence, by weak* compactness $u\;\in L^{\infty}\left(\left[0,T\right];H^{s}\left(\mathbb{T}^{2}\right)\right)$. To establish the
uniqueness of $u$ we follow a similar argument as above and the continuous dependence one uses the Bona-Smith argument(see\cite{bona1975initial}).

\begin{remark}
	
	Following the ideas of \cite{duqueversion}, we obtain a better result of local and global well-posedness 
	to the regularized problem  (\ref{regularizado2});   
	\begin{equation} \label{regularizado2}
		\begin{cases}
			u_{t}-\partial_{x}\left(D_{x}^{1+\alpha}u\pm D_{y}^{1+\beta}u\right)+uu_{x}+\mu\Delta^{2}u=0 & \left(x,y\right)\in\mathbb{T}^{2},\: t\in\mathbb{R} \\
			u\left(x,y,0\right)=\phi\left(x,y\right)&  \mu>0.
		\end{cases}
	\end{equation}
	\begin{enumerate}
		\item For the local well-posedness with  $\mu>0$ and $-2< s<2$.  
		We consider, 
		\begin{align*}
			\chi_{T}^{s}=\left\{ u_{\mu}\in C\left(\left[0,T\right];H^{s}\left(\mathbb{T}^{2}\right)\right):\left\Vert u\right\Vert _{\chi_{T}^{s}}<\infty\right\}
		\end{align*}
		with $\left\Vert u\right\Vert _{\chi_{T}^{s}}=\underset{\left[0,T\right]}{sup}\left\{ \left\Vert u\left(t\right)\right\Vert _{s}+t^{\frac{s}{4}}\left\Vert u\right\Vert _{0}\right\} $ .
		We apply the fixed point theorem and  respective linear and nonlinear estimates; 
		\begin{align*}
			\left\Vert \mathbb{W}_{\mu}\left(t\right)\phi\right\Vert _{\chi_{T}^{s}}&\leq\left\Vert \mathbb{W}_{\mu}\left(t\right)\phi\right\Vert _{s}+t^{\frac{s}{4}}\left\Vert \mathbb{W}_{\mu}\left(t\right)\phi\right\Vert _{0}\\&\leq\left( 1+C_{s}\left(T^{\frac{\left|s\right|}{4}}+\mu^{-\frac{\left|s\right|}{4}}\right)\right) \left\Vert \phi\right\Vert _{s}
		\end{align*}
		and
		\begin{align*}
			&\left\Vert \int_{0}^{t}\mathbb{W}_{\mu}\left(t-t'\right)\partial_{x}u^{2}dt'\right\Vert _{\chi_{T}^{s}}\\&
			\leq\left\Vert \int_{0}^{t}\mathbb{W}_{\mu}\left(t-t'\right)\partial_{x}u^{2}dt'\right\Vert _{s}+t^{\frac{s}{4}}\left\Vert \int_{0}^{t}\mathbb{W}_{\mu}\left(t-t'\right)\partial_{x}u^{2}dt'\right\Vert _{0}\\& \leq\left( \left(\mu^{-\frac{\left(s+2\right)}{4}}+\mu^{-\frac{1}{2}}\right)T^{\frac{2-\left|s\right|}{4}}+\mu^{-\frac{1}{4}}T^{\frac{3}{4}}\right) \left\Vert u\right\Vert _{\chi_{T}^{s}}^{2}.
		\end{align*}
		
		\item The regularized problem  (\ref{regularizado2})  satisfies,
		\begin{align}
			\left\Vert \phi\right\Vert _{0}^{2}=\left\Vert u\left(t\right)\right\Vert _{0}^{2}+\mu\int_{0}^{t}\left\Vert \Delta u\right\Vert _{0}^{2}dt',
		\end{align}
		\begin{align}\label{h11dos}
			\left\Vert \partial_{x}u\right\Vert _{0}^{2}\leq\left\Vert \partial_{x}\phi\right\Vert _{0}^{2}e^{\epsilon^{2}T}
		\end{align}
		and
		\begin{align}\label{h11tres}
			\left\Vert \partial_{y}u\right\Vert _{0}^{2}\leq\left\Vert \partial_{y}\phi\right\Vert _{0}^{2}e^{\left(\epsilon^{-2}+\epsilon^{-\frac{4}{3}}\right)T}
		\end{align}
		We get (\ref{h11dos}) and (\ref{h11tres}), applying Gagliardo -Nirenberg inequality (Theorem 3.70 in \cite{aubin} ) 
		for compact surfaces with $\hat{u}(0,0)=0$. with these ingredients we establish  global well-posedness for $s>-2$; 
	\end{enumerate}
	\begin{theorem}
		Let $s\in(-2,2)$ and $\mu>0$. Then (\ref{regularizado2}) is global well-posedness. 
	\end{theorem}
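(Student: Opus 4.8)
The plan is to combine a local well-posedness result obtained by a contraction argument exploiting the parabolic smoothing of the regularizing term $\mu\Delta^{2}$ with global a priori bounds furnished by the energy identities recorded in the preceding remark. Write $\mathbb{W}_{\mu}(t)$ for the semigroup generated by the linear part $\partial_{x}(D_{x}^{1+\alpha}\pm D_{y}^{1+\beta})-\mu\Delta^{2}$ and reformulate (\ref{regularizado2}) in Duhamel form as
\[
u(t)=\mathbb{W}_{\mu}(t)\phi-\frac{1}{2}\int_{0}^{t}\mathbb{W}_{\mu}(t-t')\partial_{x}(u^{2})\,dt'.
\]
The symbol of $\mathbb{W}_{\mu}$ carries the factor $e^{-\mu t (m^{2}+n^{2})^{2}}$, so for $t>0$ it maps $H^{s}$ smoothly into $H^{\infty}$, gaining arbitrarily many derivatives at the expense of negative powers of $\mu t$. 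First I would fix $\mu>0$, $s\in(-2,2)$, and verify, on the weighted space $\chi_{T}^{s}$ with norm $\sup_{[0,T]}\{\|u(t)\|_{s}+t^{s/4}\|u\|_{0}\}$, the linear and nonlinear bounds already stated in the remark; these show that the Duhamel map is a contraction on a ball of $\chi_{T}^{s}$ provided $T=T(\|\phi\|_{s},\mu)$ is small, yielding a unique local solution together with continuous dependence on the data.

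To pass from local to global existence I would close the a priori estimates listed in the remark. Pairing the equation with $u$ and integrating over $\mathbb{T}^{2}$ produces the $L^{2}$ identity $\|\phi\|_{0}^{2}=\|u(t)\|_{0}^{2}+\mu\int_{0}^{t}\|\Delta u\|_{0}^{2}\,dt'$, which simultaneously bounds $\|u(t)\|_{0}$ by $\|\phi\|_{0}$ and controls the dissipation $\mu\int_{0}^{t}\|\Delta u\|_{0}^{2}\,dt'$. Next I would run the $H^{1}$ energy estimates: differentiating the equation in $x$ and in $y$, pairing with $\partial_{x}u$ and $\partial_{y}u$, and estimating the cubic nonlinear contributions by the Gagliardo--Nirenberg inequality on the compact surface $\mathbb{T}^{2}$ (using $\widehat{u}(0,0)=0$). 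The key is to absorb the highest-order term arising from the nonlinearity into the biharmonic dissipation $\mu\|\Delta u\|_{0}^{2}$ via Young's inequality with a small parameter $\epsilon$; this is precisely what generates the exponential factors $e^{\epsilon^{2}T}$ in (\ref{h11dos}) and $e^{(\epsilon^{-2}+\epsilon^{-4/3})T}$ in (\ref{h11tres}). Together these yield a bound on $\|u(t)\|_{H^{1}}$ that is finite on every bounded time interval.

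With these bounds in hand, globalization is routine. Since the local existence time depends only on $\|\phi\|_{s}$ and $\mu$, and since the smoothing of $\mathbb{W}_{\mu}$ immediately places the solution in $H^{\infty}$ for $t>0$, the a priori control of $\|u(t)\|_{H^{1}}$---and hence, by the parabolic smoothing, of $\|u(t)\|_{H^{s}}$ for any $s<2$---prevents finite-time blow-up, so the local solution can be reopened at each step and continued to an arbitrary time. Uniqueness and continuity of the flow in $H^{s}$ then follow from the contraction estimate together with a Bona--Smith approximation argument, as in the proof of Theorem \ref{teoimpo}.

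The step I expect to be the main obstacle is closing the $H^{1}$ energy estimates with the correct $\mu$- and $\epsilon$-dependence: one must show that the worst term coming from $uu_{x}$ (and its $y$-derivative) is genuinely of lower order than the dissipation $\mu\|\Delta u\|_{0}^{2}$ after the Gagliardo--Nirenberg interpolation, so that Young's inequality absorbs it. Choosing $\epsilon$ so that the absorbed constant stays below $\mu$ while the remaining terms feed into Gronwall's inequality---and checking that the resulting existence time does not degenerate as the iteration proceeds---is the delicate bookkeeping, especially near the endpoints $s\to 2^{-}$ and $s\to -2^{+}$, where the weight $t^{s/4}$ and the smoothing exponents are most strained.
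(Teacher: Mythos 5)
Your proposal follows essentially the same route as the paper: local well-posedness by contraction in the weighted space $\chi_{T}^{s}$ using the parabolic smoothing of $\mathbb{W}_{\mu}$ together with the stated linear and nonlinear estimates, then the $L^{2}$ identity and the $H^{1}$-type bounds (\ref{h11dos})--(\ref{h11tres}) via Gagliardo--Nirenberg on $\mathbb{T}^{2}$ with $\hat{u}(0,0)=0$, and finally a standard continuation argument. The paper only sketches these ingredients (following \cite{duqueversion}), and your elaboration, including absorbing the nonlinearity into the dissipation $\mu\left\Vert \Delta u\right\Vert _{0}^{2}$ and the endpoint bookkeeping, is consistent with that sketch.
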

	
\end{remark}
\textbf{Acknowledgements.} Carolina Albarracin acknowledge support from Colciencias-Colombia  by convocatory N°785 of national doctorates. 

\bibliographystyle{amsplain}
\bibliography{RCMBibTeX}

\end{document}